\documentclass[pre,preprint,showpacs,floatfix]{revtex4-1}

\usepackage{setspace}
\usepackage{multirow}
\usepackage{latexsym}
\usepackage{amssymb,amsmath,amsfonts}
\usepackage{bm,bbm}
\usepackage{graphicx,epsfig}
\usepackage{caption, subcaption}
\usepackage{url}

\usepackage{fullpage}
\usepackage{cases}

\usepackage{booktabs}
\usepackage{paralist}
\usepackage{mathtools}
\usepackage{multirow}
\usepackage{color}
\usepackage{gensymb}
\usepackage{scalerel}

\usepackage{amsthm} 
\newtheorem{theorem}{Theorem}

\newcommand{\bfs}{\mathbf{s}} 
\newcommand{\bfz}{\mathbf{z}} 

\newcommand{\bfr}{\mathbf{r}}
\newcommand{\Do}{\mathcal{D}}

\newcommand{\E}{\mathrm{e}} 
\newcommand{\Iden}{\mathbf{I}_{N}}

\newcommand{\R}{\mathbb{R}}
\newcommand{\Rd}{\mathbb{R}^{d}}
\newcommand{\Zd}{\mathbb{Z}^{d}}
\newcommand{\bfh}{\mathbf{h}}
\newcommand{\rmx}{{\rm x}}
\newcommand{\Or}{\mathcal{O}}

\newcommand{\bfso}{{\bfz}_p}
\newcommand{\hx}{\hat{x}}

\newcommand{\Pd}{\mathcal{P}}  

\newcommand{\HH}{{\mathcal H}}  
\newcommand{\hHH}{{\mathcal {\hat{H}}}}

\newcommand{\bmthe}{{\bm \theta}}  
\newcommand{\xo}{x}
\newcommand{\bXo}{\mathbf{x}}

\newcommand{\xsam}{\bXo_{\mathrm S}}

\newcommand{\Samp}{{\mathbb{S}_N}}

\newcommand{\mx}{m_{\rmx}}

\newcommand{\bfmx}{\mathbf{\mx}}

\newcommand{\la}{\lambda}

\newcommand{\beq}{\begin{equation}}
\newcommand{\eeq}{\end{equation}}

\hyphenation{Hri-sto-pu-los}


\begin{document}

\title{Stochastic Local Interaction Model: Geostatistics without Kriging}

\author{Dionissios T. Hristopulos}
 \email{dionisi@mred.tuc.gr}   
\author{Andreas Pavlides}
\author{Vasiliki D. Agou}
\author{Panagiota Gkafa}
\affiliation{School of Mineral Resources Engineering, Technical University of Crete,
Chania 73100, Greece}

\date{\today}

\begin{abstract}
Classical geostatistical methods face serious computational challenges if they are confronted with large sets of spatially distributed data. We present a simplified stochastic local interaction (SLI) model for  computationally efficient spatial prediction that can handle large data. The SLI method constructs a spatial interaction matrix (precision matrix) that accounts for the data values, their locations, and the sampling density variations without user input. We show that this precision matrix is strictly positive definite. The SLI approach does not require matrix inversion for parameter estimation, spatial prediction, and uncertainty estimation, leading to computational procedures that are significantly less intensive computationally than kriging. The precision matrix involves compact kernel functions (spherical, quadratic, etc.) which enable the application of sparse matrix methods,  thus improving computational time and memory requirements. We investigate the proposed SLI method with a data set that includes approximately 11500 drill-hole data of coal thickness from Campbell County (Wyoming, USA).  We also compare SLI with ordinary kriging (OK) in terms of estimation performance, using cross validation analysis, and computational time. According to the validation measures used, SLI performs slightly better in estimating seam thickness than OK in terms of cross-validation measures. In terms of computation time, SLI prediction is 3 to 25 times (depending on the size of the kriging neighborhood) faster than OK for the same grid size.
\end{abstract}

\pacs{02.50.-r, 02.50.Tt, 02.60.Ed, 04.60.Nc, 05.50.+q, 89.30.A-, 89.60.-k}

\keywords{fast interpolation, precision matrix, big data, kernel function, statistical learning, Gaussian Markov random fields, coal resources estimation}

\maketitle

\newpage

\section{Introduction} \label{sec:Intro}

Kriging and its various flavors are the golden rule in geostatistics~\cite{Christakos92,Cressie93,Wackernagel03,Olea12, Chiles12} and in mining applications of geostatistics in particular~\cite{Kyriakidis04,Armstrong98,Dimitrakopoulos07}. However, the application of kriging to large datasets faces  difficulties due to the computationally demanding inversion of the covariance matrix~\cite{Sun12,Zhong16}. Hence, various shortcuts and approximations have been developed that allow working with large datasets including the use of a finite search radius~\cite{Rivoirard87}, tapered covariance functions~\cite{Furrer06}, and fixed rank kriging~\cite{Cressie08}.
For reviews of the problem with large data see~\cite{Sun12,Lasinio13}.
For data that are collected on rectangular grids there is an efficient solution which is based on Gaussian Markov random fields (GMRFs)~\cite{Rue05}. However, this approach cannot be directly extended to scattered data, although it is possible to generalize GMRFs by linking them to stochastic partial differential equations~\cite{Rue11}.

The motivation for  \emph{stochastic local interaction} (SLI) models is the possibility, known in the statistical field theories of physics and in Markov random field models, to construct spatial and spatiotemporal dependence based on energy functions with \emph{local interactions}. In statistical physics this idea is present in the widely-used \emph{Boltzmann-Gibbs distributions} that include the Gaussian field theory and the binary Ising model among others, e.g.~\cite{Mussardo10}.

Local interactions allow building sparse and explicit precision (inverse covariance) matrices for the spatial dependence. Hence, computationally fast  algorithms can be designed for \emph{spatial prediction} based on the conditional mean. The uncertainty of the estimates is easily assessed by means of the  \emph{conditional variance} which follows directly from the precision matrix. The explicit form of the precision matrix implies independence from the curse of covariance matrix inversion, which plagues the application of geostatistical methods to large datasets. Hence, such models can be used as computationally efficient alternatives to kriging.

An SLI model that involves gradient and curvature terms was presented in~\cite{dth15cg}.
This model has a precision matrix that is not demonstrably non-negative definite, even though it is in practice well-defined in most cases. Herein we present a simpler version of the model which does not include the curvature term and possesses a positive definite precision matrix.

The SLI model can be viewed as an extension of \emph{Gaussian Markov random fields} (GMRFs)~\cite{Rue05} to scattered data by means of kernel functions.
From a different viewpoint, the SLI model can be viewed as a discrete analogue of Spartan spatial random fields which are defined in continuum space~\cite{Hristopulos2003,Hristopulos2007,dth15}.
In order to accommodate random sampling geometries, the SLI model incorporates ideas from machine learning such as the use of kernel functions. The range of the kernels is determined by a set of local bandwidths which is learned from the sample data.
Hence, SLI prediction uses only a subset of the data for prediction at any given point. In kriging this restriction is imposed by empirically determining a search neighborhood by trial and error. In the case of SLI, the range of the kernel is automatically adjusted by the algorithm (see Section~\ref{ssec:kernel_weights}).  An additional advantage of SLI over kriging is that the former does not require covariance matrix inversion to compute the prediction and prediction uncertainty.

The remainder of this manuscript is structured as follows: Section~\ref{sec:Methods} presents the SLI methodology, including the SLI model,  the equations for prediction and conditional standard deviation, and the leave-one-out cross validation procedure for model parameter estimation. In Section~\ref{Sec:Data} the Campbell coal dataset is briefly described. Section~\ref{sec:SLI-estim-gilette} presents the SLI model parameter estimation, while Section~\ref{sec:OrdKrig} focuses on the estimation of the variogram for ordinary kriging. Section~\ref{sec:estimation-coal} provides the SLI-based and kriging-based estimates of coal resources and the associated uncertainties. Section~\ref{sec:cross-validation} studies the stability of the SLI estimates and  compares the SLI and ordinary kriging predictive performance using leave-P-out cross validation.  Finally, in Section~\ref{sec:Conclusions} we present our conclusions, discuss potential applications of the method and analogies with other approaches, and outline directions for future research.


\section{Methodology} \label{sec:Methods}

\subsection{Notation}

In the following,    $\xsam \equiv (\xo_{1}, \ldots, \xo_{N})^{\top}$ denotes the sample values at the locations of the sampling set $\Samp=\{ \bfs_{1}, \ldots, \bfs_{N} \}$, where $\bfs_{i} \in \Do \subset \Rd, \, i=1, \ldots, N$. The superscript $\top$ denotes the matrix transpose.  The prediction set involves the points $\bfso, p=1, \ldots, P$ which typically correspond to the nodes of a \emph{map grid} $\Pd \subset \Zd$.  In general, $\Samp$ and $\Pd$ are disjoint sets.
The SLI optimal prediction at these points will be denoted by $\hx_{p}$.
Assuming a constant mean, $\mx$,  fluctuations around the mean will be denoted by primes, i.e.,
$\xo'_{i} = \xo_{i} - \mx$. In case the mean is not homogeneous, a trend function is subtracted to obtain the fluctuations. The SLI model is defined is terms of the fluctuations. However, the constant mean or the trend function do not need to be known \emph{a priori}, since they can be determined in the estimation stage.

\subsection{Kernel Functions} \label{ssec:KernelFun}

The SLI formulation uses ideas  from \emph{kernel regression}~\cite{Nadaraya64,Watson64} in order to express the local spatial dependence. The interactions between neighboring points are expressed in terms of suitably selected weighting functions, which  are supplied by \emph{kernel} functions.

A kernel (weighting) function $K(\bfs_n, \bfs_m): \Rd \times \Rd \to \R$ is a non-negative function that assigns a real value
to any pair of points $\bfs_{n}$ and $\bfs_{m}$.
The range of the kernel is determined by a characteristic length scale known as the \emph{bandwidth} and denoted by $h$.
The bandwidth should tend to zero as $N \to \infty$, so that for large and dense datasets emphasis is placed on points that are close to the target site.
 For $\bfs_n, \bfs_m \in \Rd$ let  $u = \| \bfs_n - \bfs_m \|/h$
 represent the normalized  distance.
 A kernel function is (a) non-negative, i.e.,  $K(u) \ge 0$  for all  $u$; (b) symmetric, i.e., $K(-u) = K(u)$ for all $u \in \R_{0,+}$; (c)
 maximized at $u =0$; (d)  a continuous function of $u$.

A list of common kernel functions is given in Table~\ref{tab:kernel}.   The first six functions of the table (triangular, Epanechnikov, quadratic, quartic, tricube, spherical, Cauchy) are compactly supported, while the last two functions (exponential, Gaussian) are
infinitely extended but integrable.

\subsection{Kernel Weights} \label{ssec:kernel_weights}
The kernel functions can be used to define weights for the interaction between two points $\bfs_{n}$ and $\bfs_{m}$ as follows

\beq
\label{eq:kernel-weight}
w(\bfr_{n,m}; \bfh) = \frac{K\left(\bfr_{n,m}/h_{n}\right)}{\sum_{k=1}^{N}\sum_{l=1}^{N} K\left(\bfr_{k,l}/h_{k}\right) }, \quad \text{for } \, n,m = 1\, \ldots, N,
\eeq
where $\bfr_{n,m}=\bfs_{n} - \bfs_{m}$ is the distance vector, while $h_{n}$ is a bandwidth specific to the point $\bfs_{n}$ that reflects the sampling density around that point. As a result of the local bandwidth definition in~\eqref{eq:kernel-weight}, the kernel weights are
not necessarily symmetric with respect to interchange of the indices, i.e., in general
$w(\bfr_{n,m}; \bfh) \neq w(\bfr_{m,n}; \bfh)$, since the local sampling density around the point $\bfs_{n}$ can be quite different than the sampling density around $\bfs_{m}$.

In general, one can consider $N$ different bandwidths. However, then the spatial model involves more parameters than data values, since the SLI model has other coefficients in addition to the bandwidths. It is better to define the \emph{local bandwidths} in terms of a small set of free parameters. An intuitive and flexible choice is to use
\beq
\label{eq:bandwidth}
h_{n} = \mu\, D_{n,[k]}(\Samp),
\eeq
where $h_{n}$ is the bandwidth assigned to the point $\bfs_{n}$,  $D_{n,[k]}(\Samp)$ is the  distance between  ${\bfs}_{n}$  and its $k$\emph{-nearest neighbor} in  the sampling set $\Samp$, and $\mu > 0$ is a positive bandwidth tuning parameter. The local distances $D_{n,[k]}(\Samp)$ can be efficiently computed using near-neighbor estimation algorithms, while the global coefficient $\mu$ becomes a model parameter that is determined from the data via the cross-validation procedure as described in Section~\ref{ssec:sli-estim}.  The above definition of the bandwidth implies that larger bandwidth values are assigned to points in areas where the sampling is sparse.

\begin{table}
\caption{\label{tab:kernel} List of  common kernel functions. The normalized distance $u$ is given by $u= \| \bfr \|/h$. $\vartheta(x)=1$ if $x\ge 0$ and $\vartheta(x)=0$ if $x< 0$
is the unit step function. } \centering
\renewcommand\tabcolsep{10pt} 
\renewcommand\arraystretch{1.2} 
\begin{tabular}{ll}
\hline
\hline
Name &  Equation $(u \ge 0)$ \\
\hline
Triangular & $K(u)=(1-u)\,\vartheta(1-u)$  \\[1ex]
Epanechnikov & $K(u)=(1-u)^2 \, \vartheta(1-u)$ \\[1ex]
Quadratic & $K(u)=(1-u^2) \, \vartheta(1-u)$  \\[1ex]
Quartic (biweight) &  $K(u)=(1-u^2)^2 \, \vartheta(1-u)$  \\[1ex]
Tricube  & $K(u)=(1-u^3)^{3} \, \vartheta(1-u)$  \\[1ex]
Spherical &  $K(u)= (1-1.5u+0.5u^3) \, \vartheta(1-u)$  \\[1ex]
Truncated Cauchy & $K(u)=1/(1+u^2) \, \vartheta(1-u)$ \\[1ex]Exponential & $K(u)=\exp(-u)$   \\[1ex]
Gaussian &  $K(u)=\exp(-u^2)$  \\[1ex]
\hline\hline 
\end{tabular}
\end{table}

\subsection{Gradient-based SLI Energy Function}
\label{ssec:gradient-sli}

The SLI model is defined in terms of a Gibbs-Boltzmann exponential joint density of the following form~\cite{Chiles12}

\[
 f(\xsam) = \frac{1}{Z(\bmthe)} \E^{-\HH (\xsam ;\bmthe)},
\]
where $\bmthe$ is the SLI parameter vector that needs to be determined from the data.
The function $\HH (\xsam ;\bmthe)$ is the energy of the  SLI model which incorporates the local interactions. The denominator $Z(\bmthe)$ is known in physics as the \emph{partition function} and represents a normalization constant~\cite{Mussardo10}. It will not be necessary to evaluate this constant in order to estimate the parameters or to predict at non-measured positions.

In the case of scattered data, a  parsimonious SLI energy functional that includes squared fluctuation  and squared ``gradient'' terms is given by~\cite{dth15}

\beq
\label{eq:sli-ene}
\HH (\xsam ;\bmthe)     =
    \frac{1}{2\,\la \, }   \left[ \frac{(\xsam - \bfmx \,)^{\top} (\xsam - \bfmx\,)}{N} +  c_1 \,    {\mathcal{S}_1}(\xsam; \bfh) \right],
\eeq
where $\bfmx$ is the vector of  the local expected values (which is assumed constant herein), $\la$ is a  \emph{scale coefficient} that is proportional to the overall variability of the data, $c_{1}>0$ is a positive \emph{rigidity coefficient} that weighs the cost of gradients in the data, $\mathcal{S}_1(\xsam; \bfh)$ is the kernel-average square gradient, $\bfh = (h_{1}, \ldots, h_{N})^\top$ is a vector of \emph{local kernel bandwidths} that is used to determine the local bandwidths, and
$\bmthe  = \left( \la, \mx, c_{1}, k, \mu   \right)^\top$.

The \emph{gradient-based energy} terms are given by the following kernel average

\beq
\label{eq:sli-gradient}
\mathcal{S}_1(\xsam;\bfh)= \big\langle \left( x_{n} - x_{k}\right)^{2} \big\rangle_{\bfh}
= \sum_{n=1}^{N}\sum_{k=1}^{N} w(\bfr_{n,k}; \bfh)\left( x_{n} - x_{k}\right)^{2},
\eeq

\noindent where $\big\langle \left( x_{n} - x_{k}\right)^{2} \big\rangle_{\bfh} $ represents the Nadaraya-Watson average~\cite{Nadaraya64,Watson64} of the square differences of the data values.

\subsection{SLI Precision Matrix Formulation} \label{ssec:PrecMatForm}

The SLI energy function~\eqref{eq:sli-ene} is a quadratic function of the sample values. Hence, it can be equivalently expressed in the following form that involves the precision matrix ${\mathbf J}(\bmthe)$:

\beq
\label{eq:sli-ene-J}
\HH (\xsam ;\bmthe)  =
\frac{1}{2}
  (\xsam - \bfmx)^{\top} \, {\mathbf J}(\bmthe) \,(\xsam - \bfmx).
\eeq

The precision matrix has the following structure

\begin{subequations}
\label{eq:sli-J}
\beq
\label{eq:sli-J-a}
{\mathbf J}(\bmthe')    =     \frac{1}{\la }
\,\left\{ \frac{\Iden}{N} + c_{1} \, {\mathbf J}_{1}(\bfh)  \right\},
\eeq
and depends on the reduced parameter vector $\bmthe' = \left( \lambda, c_{1}, k, \mu \right)^\top$.  In the above, $\Iden$ is the $N \times N$ identity matrix which results from the energy of the squared fluctuations, and  ${\mathbf J}_{1}(\bfh)$ is  the \emph{gradient precision sub-matrix} which results from the sum of the squared differences and is given by

\beq
\label{eq:sli-J-grad}
[{\mathbf J}_{1}(\bfh)]_{n,m}  = - w_{n,m}(\bfh) - w_{m,n}(\bfh) +
\delta_{n,m} \, \sum_{k=1}^{N} \left[ w_{n,k}(\bfh) + w_{k,n}(\bfh)\right],
\eeq
\end{subequations}

\noindent where $w_{n,m}(\bfh)  = w(\bfr_{n,m}; \bfh)$ is an abbreviation of the kernel weights~\eqref{eq:kernel-weight}  and $\delta_{n,m}=1$ if $m=n$ while $\delta_{n,m}=0$ if $m \neq n$ is the \emph{Kronecker delta}.

\begin{theorem}
The SLI precision matrix defined by~\eqref{eq:sli-J} is strictly positive definite for $\la >0$ and $c_{1}>0$. Moreover, it is diagonally dominant.
\end{theorem}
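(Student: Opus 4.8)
The plan is to recognize the gradient precision sub-matrix $\mathbf{J}_1(\bfh)$ of~\eqref{eq:sli-J-grad} as the matrix of the manifestly non-negative quadratic form $\mathcal{S}_1$ from~\eqref{eq:sli-gradient}, and then to use the strictly positive identity block $\Iden/N$ appearing in~\eqref{eq:sli-J-a} to upgrade non-negativity of $\mathbf{J}_1(\bfh)$ to strict positivity of $\mathbf{J}(\bmthe')$. Diagonal dominance will be read off directly from the entries in~\eqref{eq:sli-J-grad}, and in fact furnishes a second, Gershgorin-based route to positive definiteness.

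First I would note that $\mathbf{J}_1(\bfh)$ is symmetric, since the right-hand side of~\eqref{eq:sli-J-grad} is invariant under the exchange $n\leftrightarrow m$. Next, for an arbitrary $\bfz=(z_1,\dots,z_N)^{\top}\in\R^{N}$, I would establish the algebraic identity
\[
\sum_{n=1}^{N}\sum_{m=1}^{N} w_{n,m}(\bfh)\,(z_n-z_m)^2
= \sum_{n=1}^{N} z_n^2 \sum_{k=1}^{N}\bigl[w_{n,k}(\bfh)+w_{k,n}(\bfh)\bigr]
- \sum_{n=1}^{N}\sum_{m=1}^{N}\bigl[w_{n,m}(\bfh)+w_{m,n}(\bfh)\bigr]z_n z_m,
\]
obtained by expanding the square and symmetrizing the cross term through relabelling of summation indices; comparing the coefficient of $z_n z_m$ on the right with~\eqref{eq:sli-J-grad} shows that the right-hand side is exactly $\bfz^{\top}\mathbf{J}_1(\bfh)\,\bfz$ (the $k=n$ contribution in the diagonal sum matches the Kronecker term in~\eqref{eq:sli-J-grad}). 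Taking $\bfz=\xsam-\bfmx$ recovers $\mathcal{S}_1(\xsam;\bfh)$, consistently with~\eqref{eq:sli-ene} and~\eqref{eq:sli-ene-J}. Since every kernel weight satisfies $w_{n,m}(\bfh)\ge 0$ (the kernels of Table~\ref{tab:kernel} are non-negative), the left-hand side is a sum of non-negative terms, whence $\bfz^{\top}\mathbf{J}_1(\bfh)\,\bfz\ge 0$ for all $\bfz$, i.e. $\mathbf{J}_1(\bfh)$ is non-negative definite. Then for $\la>0$, $c_1>0$ and any $\bfz\neq\mathbf{0}$,
\[
\bfz^{\top}\mathbf{J}(\bmthe')\,\bfz
=\frac{1}{\la}\left(\frac{\|\bfz\|^2}{N}+c_1\,\bfz^{\top}\mathbf{J}_1(\bfh)\,\bfz\right)
\ge \frac{\|\bfz\|^2}{\la N}>0,
\]
which is strict positive definiteness.

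For diagonal dominance I would evaluate~\eqref{eq:sli-J-grad} entrywise. On the diagonal the $k=n$ term contributes $w_{n,n}(\bfh)+w_{n,n}(\bfh)$ to the sum $\sum_k[w_{n,k}(\bfh)+w_{k,n}(\bfh)]$, cancelling the explicit $-w_{n,n}(\bfh)-w_{n,n}(\bfh)$ and leaving $[\mathbf{J}_1(\bfh)]_{n,n}=\sum_{m\neq n}\bigl[w_{n,m}(\bfh)+w_{m,n}(\bfh)\bigr]\ge 0$; for $m\neq n$ one has $[\mathbf{J}_1(\bfh)]_{n,m}=-w_{n,m}(\bfh)-w_{m,n}(\bfh)\le 0$. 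Hence $\sum_{m\neq n}\bigl|[\mathbf{J}_1(\bfh)]_{n,m}\bigr|=[\mathbf{J}_1(\bfh)]_{n,n}$, so $\mathbf{J}_1(\bfh)$ is weakly diagonally dominant, and adding the $\Iden/N$ block gives, for every $n$,
\[
[\mathbf{J}(\bmthe')]_{n,n}-\sum_{m\neq n}\bigl|[\mathbf{J}(\bmthe')]_{n,m}\bigr|=\frac{1}{\la N}>0,
\]
so $\mathbf{J}(\bmthe')$ is in fact strictly diagonally dominant; being symmetric with positive diagonal, it is then positive definite by Gershgorin's circle theorem as well.

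I do not expect a genuine obstacle: the substance of the theorem is just that $\mathcal{S}_1$ is a non-negatively weighted sum of squares. The only point demanding care is the index bookkeeping of the $k=n$ term in the diagonal sum of~\eqref{eq:sli-J-grad} and the correct symmetrization of the bilinear cross term; once the identity $\bfz^{\top}\mathbf{J}_1(\bfh)\,\bfz=\mathcal{S}_1$ is in place, both assertions follow immediately.
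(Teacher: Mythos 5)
Your proof is correct and takes essentially the same route as the paper's: both identify $\bfz^{\top}\mathbf{J}(\bmthe')\,\bfz$ with twice the energy, i.e.\ the strictly positive fluctuation term $\|\bfz\|^2/N\la$ plus the manifestly non-negative weighted sum of squares $c_1\mathcal{S}_1/\la$, and both establish diagonal dominance entrywise via the same computation $|J_{n,n}|-\sum_{m\neq n}|J_{n,m}|=1/N\la$. You merely verify the identity $\bfz^{\top}\mathbf{J}_1(\bfh)\,\bfz=\sum_{n,m}w_{n,m}(z_n-z_m)^2$ more explicitly than the paper does, and add a Gershgorin remark; these are welcome elaborations but not a different argument.
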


\begin{proof}
The  matrix ${\mathbf J}(\bmthe')$ is positive definite if it is  real-valued and  symmetric, every non-zero $\bXo=(x_{1}, \ldots, x_{n})^\top$ and every $n \in \mathbb{N}$ it holds that $\bXo^{\top} {\mathbf J}(\bmthe') \bXo \ge 0.$ The matrix ${\mathbf J}(\bmthe')$ defined by~\eqref{eq:sli-J} is real-valued  and symmetric by construction. In addition, if we set $\bXo' = \xsam - \bfmx$, it follows from~\eqref{eq:sli-ene-J} that  ${\bXo'}^{\top} {\mathbf J}(\bmthe') \bXo' = 2\HH(\bXo)$. This equation holds for any sample $\xsam$ and for any $\mx \in \R$, thus for all possible $\bXo$. However, the energy is equivalently expressed based on~\eqref{eq:sli-ene} as follows

\[
2\HH(\bXo; \bmthe)= \frac{1}{\la} \left[ \, \frac{{\bXo'}^\top \bXo'}{N} + c_{1}
\big\langle \left( x_{n} - x_{k}\right)^{2} \big\rangle_{\bfh} \right].
\]
It is straightforward to show based on~\eqref{eq:sli-ene} and~\eqref{eq:sli-gradient} that since $c_{1}, \la >0$, the energy is strictly positive for any non-zero vector $\bXo'$. If all the entries of the vector $\bXo'$ take the same value, the gradient term, which is proportional to $c_{1}$, vanishes; however, the first term is still positive. Hence, the matrix ${\mathbf J}(\bmthe')$ is \emph{strictly positive definite}.

Diagonal dominance requires that for all $n=1, \ldots, N$ the following condition be satisfied

\[
| J_{n,n} | \ge \sum_{m \neq n=1}^{N}  | J_{n,m}|.
\]

The diagonal dominance condition is stricter than positive definiteness. Hence, the former implies the latter (but not vice versa).
From~\eqref{eq:sli-J}  it follows that

\begin{subequations}
\label{eq:sli-J-mn}
\begin{align}
J_{n,n} = & \frac{1}{N \la} + \frac{c_{1}}{\la} \sum_{m \neq n=1}^{N} \left( w_{n,m} + w_{m,n} \right),
\\
J_{n,m} = & -\frac{c_{1}}{\la} \left( w_{n,m} + w_{m,n} \right), \; m \neq n.
\end{align}
\end{subequations}
Based on the above and the non-negativity of the weights, i.e., $w_{n,m} \ge 0$ for all $m,n=1, \ldots, N$, it follows that

\[
| J_{n,n} |- \sum_{m \neq n=1}^{N}  | J_{n,m}| = 1/N\la.
\]

\noindent Hence the diagonal dominance of the SLI precision matrix is proved. 

\end{proof}

\subsection{SLI Prediction}
\label{ssec:sli-predict}
In this section we show how to use the SLI model to estimate the unknown values of the field at the sites $\bfz_{p} \in \Pd$. Let us consider that  one the prediction points $\bfz_{p}$ is added to the sampling network. The SLI energy function is then modified as follows:

\beq
\label{eq:sli-ene-pred}
{\hHH} (\xsam,\xo_{p}  ;{\bmthe})= \HH (\xsam  ;{\bmthe}) + \frac{J_{p,p}\, {\xo'}^{2}_{p}}{2} + \frac{1}{2}\sum_{n=1}^{N} \left(J_{n,p} + J_{p,n} \right)\,  \xo'_{p} \xo'_{n}.
\eeq

The first term in~\eqref{eq:sli-ene-pred} is  the sample energy~\eqref{eq:sli-ene}, while the second and third terms represent the additional energy contribution that involves $\bfz_{p}$: the second term is the self-interaction of the prediction point which contributes proportionally to the square of the fluctuation, and the third term involves the interaction of the prediction point with the sampling points. The latter represents local gradients that reflect differences between the fluctuation at $\bfz_p$ and neighboring sample points.

The energy~\eqref{eq:sli-ene-pred} determines the conditional probability density function at the prediction point, which is given by
\beq
\label{eq:conditional-pdf}
f(\xo_{p} \mid \xsam,\bmthe) \propto \exp\left[ -{\hHH} (\xsam,\xo_{p}  ;{\bmthe})\right].
\eeq

The optimal SLI prediction at $\bfz_{p}$ is equal to the value that minimizes the energy~\eqref{eq:sli-ene-pred} and thus maximizes the conditional pdf. This value is a stationary point of the energy, i.e., it satisfies

\[
\hat{\xo}_{p}= \arg \min_{\xo_p} {{\hHH}}(\xsam, \xo_p ; \bmthe^{\ast}_{-\lambda}),
\]
where $\bmthe^{\ast}_{-\lambda}$ denotes the \emph{optimal parameter vector} (excluding $\lambda$) which is estimated from the data as shown in Section~\ref{ssec:sli-estim}. For now we assume that $\bmthe^{\ast}_{-\lambda}$ is known.

The first term in~\eqref{eq:sli-ene-pred} excludes the prediction point and thus it drops out in the minimization. The first-order derivatives of the next two energy terms lead to a linear equation for the stationary point, which admits the following unique solution

\beq
\label{eq:sli-prediction}
\hat{\xo}_{p}  = \mx - \frac{1}{ J_{p,p}(\bmthe^{\ast})} \, \sum_{n=1}^{N}  \,  J_{p,n}(\bmthe^{\ast}) \, (\xo_{n} - \mx).
\eeq

\noindent In~\eqref{eq:sli-prediction} we used the symmetry of the precision matrix, i.e., the fact that $J_{p,n} = J_{n,p}$. The precision matrix entries that involve the prediction points are obtained from~\eqref{eq:sli-J-mn} as follows

\begin{subequations}
\label{eq:sli-J-p}
\begin{align}
\label{eq:sli-J-pp}
    J_{p,p}(\bmthe^{\ast}) = & \frac{1}{N \la^\ast} + \frac{c_{1}^\ast}{\la^\ast} \sum_{n=1}^{N} \left( w_{n,p} + w_{p,n} \right),
    \\[1ex]
\label{eq:sli-J-pn}
    J_{p, n}(\bmthe^{\ast}) = & -\frac{c_{1}^\ast}{\la^\ast} \left( w_{n,p} + w_{p,n} \right).
\end{align}
\end{subequations}
In light of the kernel weight definition~\eqref{eq:kernel-weight}, the summands in~\eqref{eq:sli-J-pp} are either zero (if the distance between $\bfs_{n}$ and $\bfz_{p}$ exceeds the local bandwidth) or positive (otherwise).

The predictor~\eqref{eq:sli-prediction} is a stationary point of the energy. It remains to  verify that it represents the minimum so that the prediction corresponds to the mode of the conditional pdf~\eqref{eq:conditional-pdf}. The second-order derivative of the energy~\eqref{eq:sli-ene-pred} with respect to $\xo_p$ is given by $J_{p,p}$. According to~\eqref{eq:sli-J-p} it holds that $J_{p,p} >0$ since $\la^{\ast}, c_{1}^{\ast} >0$ and the kernel weights are positive. Hence the predictor indeed corresponds to the global minimum of the energy.

\paragraph{Properties of the SLI predictor}
\begin{enumerate}
\item Note that the prediction equation~\eqref{eq:sli-prediction} is analogous to the equation that determines the conditional mean in GMRFs. This is not surprising since the SLI predictor finds the minimum energy at the prediction point, conditionally on the sample values.

\item The SLI predictor is \emph{unbiased} as can be seen from~\eqref{eq:sli-prediction}: the second term on the right hand side, which is proportional to the fluctuations, vanishes upon calculating the ensemble average.

\item The SLI predictor is not an exact interpolator. According to~\eqref{eq:sli-prediction}, exactitude requires that if $\bfz_{p} \to \bfs_{n^{\ast}} \in \Samp$ then  $J_{p,n} = 0$, for all $ n \neq n^\ast$. This could only occur if the bandwidth tends to zero, or if it is sufficiently small to exclude all other points besides  $\bfs_{n^{\ast}}$ for all $n^{\ast} \in \{1, \ldots, N\}$.  However, the difference between the SLI prediction and the true value is usually negligible. This is due to the fact that the sample value at the nearest point to the prediction location has the largest linear SLI weight and hence the most impact on the prediction.

\item The prediction is independent of the \emph{scale coefficient $\la$}. This becomes obvious upon inspection of~\eqref{eq:sli-prediction}, since the prediction involves the ratio of precision matrix elements.

\item  The analogy of SLI with GMRFs allows us to calculate the \emph{conditional variance} at the prediction point as
\beq\label{eq:conditional-variance}
\sigma^{2}_{p} = \frac{1}{{J}_{p,p}} \propto \la^{\ast}.
\eeq

The conditional variance endows the SLI method with a measure of \emph{prediction uncertainty}  which is analogous to the kriging variance. Just like the kriging variance, the SLI conditional variance does not explicitly depend  on the data values. It only depends indirectly to the extent that the data influence the model parameters.

\end{enumerate}

\paragraph{Computational efficiency:} The numerical cost of the prediction equation~\eqref{eq:sli-prediction} is proportional to $N$. Since the predictor is applied $P$ times to generate the map grid, the numerical complexity is $\Or(N\,P)$.  According to~\eqref{eq:sli-J-p} the numerical complexity of calculating the precision matrix is $\Or(N)$ per prediction point; therefore the overall complexity is again $\Or(N\,P)$. However,  the kernel weights~\eqref{eq:kernel-weight} involve a double summation over the sampling points which has a computation complexity $\Or(N^\beta)$, where $1<\beta \le 2$. The maximum complexity rate, $\beta=2$, is obtained for infinite-support kernels. For compactly supported kernels,  rates  $\beta <2$ can be achieved. In general, the upper bound of the SLI computational complexity is $\Or(N^2+N\,P)$. The memory space required for storage of the precision matrix scales as $\Or(q N^2)$, where $q$ is the \emph{sparsity} of the precision matrix; for compactly supported kernels typically $q\ll 1$.

Kriging has a numerical complexity $\Or(N^3)$ for the inversion of the covariance matrix, and $\Or(N^2+N\,P)$ for the calculation of the kriging weights and the prediction~\cite{Zhong16}.  The kriging complexity is practically dominated by the inversion of the covariance matrix. In addition, kriging requires the storage of the covariance matrix which consumes $\Or(N^2)$ of memory space.  The computational requirements of kriging can be reduced by introducing various modifications, e.g., using search neighborhoods with a small number of sampling points around each prediction point~\cite{Olea12} or by means of tapered covariance functions that have compact supports~\cite{Furrer06}.

\subsection{SLI Parameter Estimation}
\label{ssec:sli-estim}
The  SLI parameter vector is given by $\bmthe  = \left( \la, \mx, c_{1}, k, \mu  \right)^\top$.
In addition to the data, the estimated $\bmthe$ also depends on the kernel function. We use the following approach to estimate the SLI model parameters from the data:

\begin{enumerate}\itemsep0.3em

\item The mean $\mx$ is set equal to the sample average. This is a reasonable choice in the absence of spatial trends, especially for large datasets. However, it is also possible to  consider $\mx$ as a parameter to be determined from the data by means of cross validation as shown by~\citet{dth19}.

\item The order $k$ of the near-neighbor used to estimate the local bandwidths according to~\eqref{eq:bandwidth} is typically assigned an integer  value determined by experimentation with the data. Herein we use $k=3$; we justify this choice below.

\item The optimal rigidity coefficient $c_{1}^{\ast}$ and  global bandwidth parameter $\mu^{\ast}$ are determined by means of  leave-one-out (LOO) cross-validation (CV). In LOO CV sample points are removed one at a time. Their values  are then estimated by means of the remaining $N-1$ sample points and the SLI prediction equation~\eqref{eq:sli-prediction}. After all the points are removed and estimated,  a specified \emph{validation measure} (e.g., mean absolute error, root mean square error) that quantifies the discrepancy between the estimates and the true values is  calculated. The validation measure is used as the \emph{cost function} for the optimization of the SLI parameters.

\item The optimal value of the scale coefficient is determined by means of~\cite{dth15cg}
\beq
\label{eq:lambda}
\lambda^{\ast}    = \frac{2\HH(\xsam;\bmthe^{\ast}_{-\la})}{N}, \; \mbox{where} \;
\bmthe^{\ast}_{-\la}= (1, \, \mx^\ast, \, c_{1}^\ast, \, k^{\ast}, \, \mu^\ast  )^\top.
\eeq
The value of $\la^\ast$ is only used in  determining the conditional variance but not for prediction.
\end{enumerate}

The above procedure is repeated for different choices of the kernel function, and the kernel model which optimizes the cost function is selected.

Maximum likelihood estimation (MLE) is also possible within the SLI framework. However, in using MLE we are forced to estimate the partition function, $Z(\bmthe)$, which involves calculating the determinant of the large, albeit sparse, precision matrix ${\mathbf J}(\bmthe')$.

\section{Description of the Data and Study Area} \label{Sec:Data}

We apply the SLI model to a set of coal thickness data from 11416 drill-holes. The data come from the Gillette field in Campbell County, Wyoming (USA). The locations of the drill-holes are shown in Fig.~\ref{fig:geotrhseis}. Campbell County lies within the Powder River Basin and includes  two of the largest coal mines in the world: North Antelope Rochelle coal mine and Black Thunder coal mine. The Gillette coal field  contains significant quantities of exploitable coal resources that are low in sulfur and ash content.

\begin{figure}[h!]
\centering
\includegraphics[width=.5\textwidth]{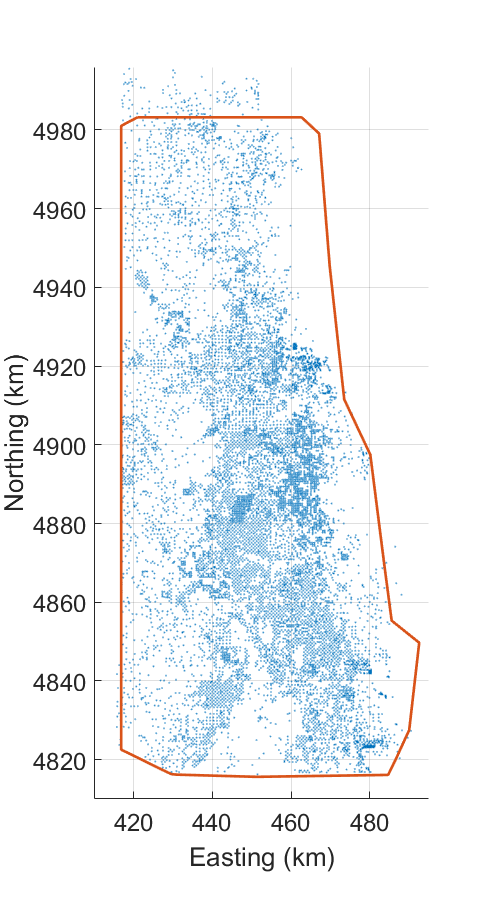}
\caption{Drill-hole locations of the coal thickness data from the Gillette field. The red line represents the border of the area within which the resources will be estimated.}
\label{fig:geotrhseis}
\end{figure}

The study domain extends over an area that covers approximately 9500 km\textsuperscript{2}. We focus on coal beds from the upper part of  Fort Union Formation which include the Roland bed and  coal in the Wyodak-Anderson zone.
The stratigraphic cross-section showcasing the Wyodak-Anderson coal zone for part of the Gillette Field is shown in Fig.~\ref{fig:TomhCampbell}.
The geological formations present in the Gillette field are shown in Fig.~\ref{fig:Coalzon}~\cite{Ellis99}.
Coal from the Wyodak zone has higher mean gross calorific value (4760 kcal/kg), lower mean ash yield (5.8\%), and lower mean total sulfur content (0.46\%) compared with the respective values for the combined Wyodak-Anderson coal zones~\cite{Ellis02}.

\begin{figure} [h!]
\centering
 \includegraphics[scale=0.5]{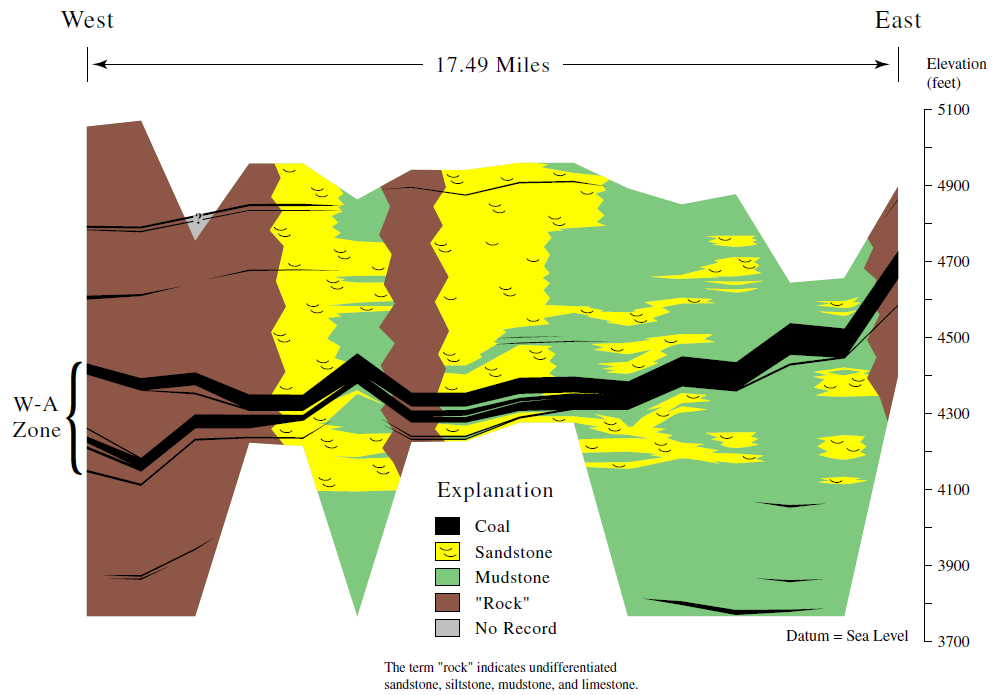}
\caption{Structural cross section showing the Wyodak-Anderson coal zone (W-A Zone) of the Gillette coalfield~\cite{Stricker07}. }
\label{fig:TomhCampbell}
\end{figure}


\begin{figure} [h!]
\centering
 \includegraphics[scale=0.5]{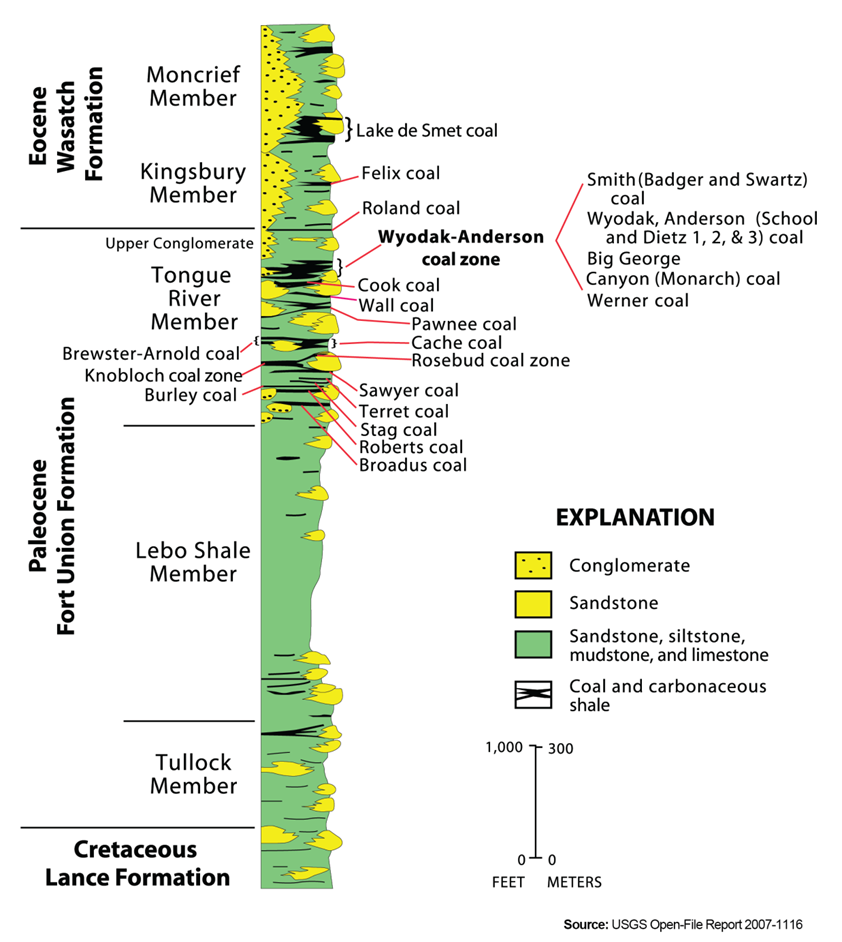}
\caption{Section showcasing geological formations and coal zones for the Gilette Field~\cite{Stricker07}. }
\label{fig:Coalzon}
\end{figure}

The main  statistics of the marginal distribution of the coal thickness data are listed in Table~\ref{Tab:Data Stat}. The frequency histogram which represents the empirical  probability distribution of the thickness values is shown in Fig.~\ref{fig:histogramdata}. Both the  statistics shown in Table~\ref{Tab:Data Stat} and the histogram in  Fig.~\ref{fig:histogramdata} show evidence of mild deviation from the normal (Gaussian) distribution.  The spatial distribution of the drill-holes, and hence the sampling density, varies significantly in space as evidenced in Fig.~\ref{fig:geotrhseis}. The median of the nearest distance between drill-holes is 458\,m.

\begin{table}
\caption{Statistics of coal thickness  in the upper part of  Fort Union Formation based on data from the 11416 drill-hole locations of Gillette Field.}
\centering
\renewcommand\tabcolsep{9pt} 
\renewcommand\arraystretch{1.2} 
\begin{tabular}{cccccc}
\hline
\hline
Mean (m) & Median (m) & Maximum (m) & Minimum (m) & Standard deviation (m) & Skewness \\
\hline
7.83 & 7.71 & 22.67 & 0.33 & 2.99 & 0.13 \\
\hline\hline
\end{tabular}
\label{Tab:Data Stat}
\end{table}

\begin{figure}[!ht]
\centering
\includegraphics[scale=0.85]{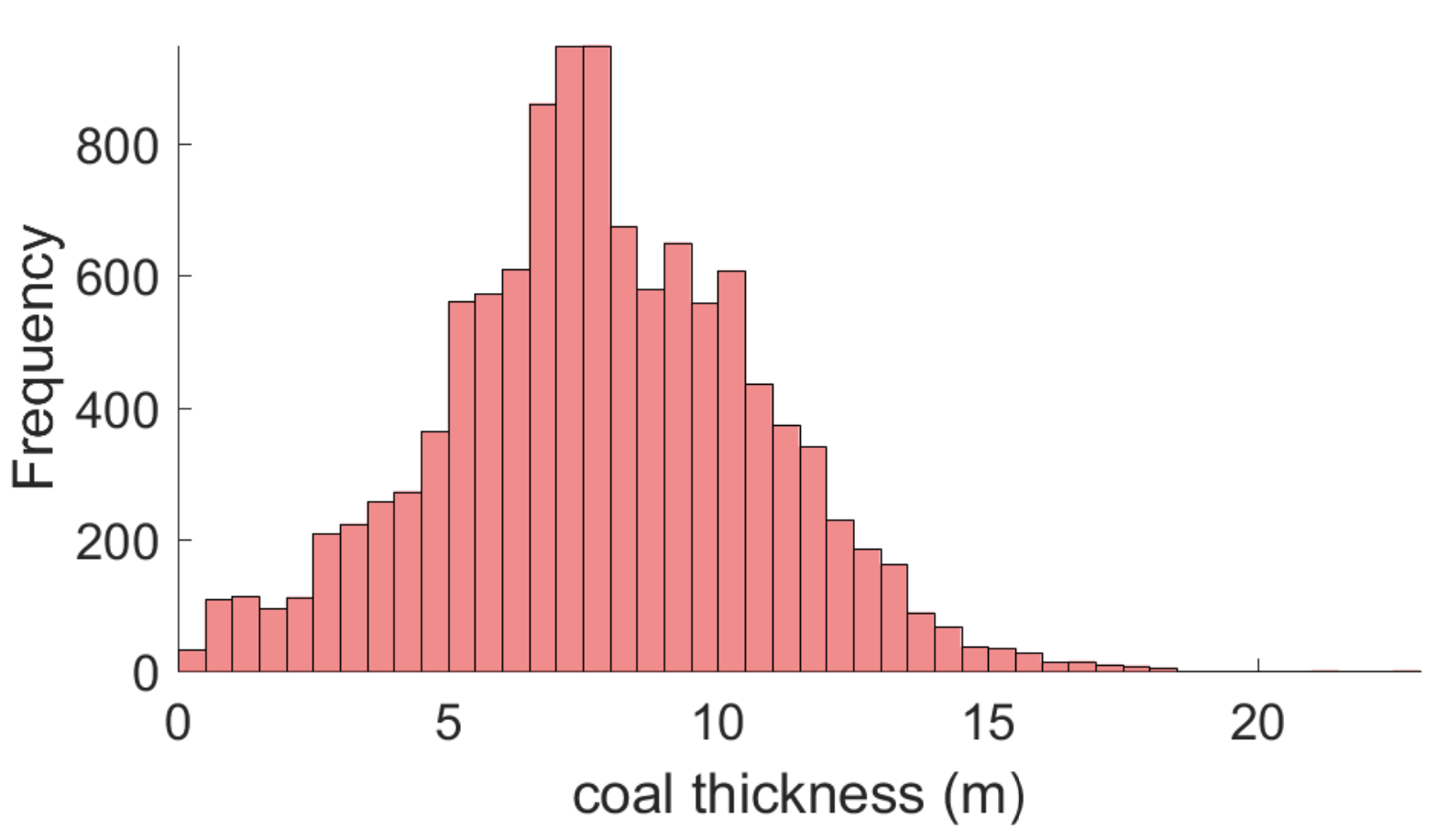}
\caption{Frequency histogram of the coal thickness distribution  in the upper part of Fort Union Formation based on data from  11416 drill holes in Gilette field.}
\label{fig:histogramdata}
\end{figure}


All the numerical tests below are conducted and timed on a desktop computer running Windows 10 operating system equipped with Intel(R) Core(TM) i7-6700K CPU $@$ 4.00GHz and 16.0 GB RAM installed. The numerical implementation was performed in the  MATLAB programming environment (release R2016a).

\section{SLI Model Estimation for Gillette Field}
\label{sec:SLI-estim-gilette}
To use SLI for interpolation we first need to estimate the model parameters  from the Gillette field coal data. We perform this task using LOO-CV as described in Section~\ref{ssec:sli-estim}.
Kernel weighting functions are utilized in the SLI predictions as explained in Section~\ref{ssec:KernelFun} to  determine the  weights given by equation~\eqref{eq:kernel-weight}.
To find the optimal kernel function we use LOO-CV  on the entire dataset and select the kernel function that optimizes the CV cost function. Herein we use the mean absolute error (MAE) as the cost function.  The optimization is performed in the Matlab environment using the constrained minimization function \verb+fmincon+ with the interior-point algorithm~\cite{Nocedall0}.

\subsection{Kernel function selection}
We compare the predictive performance of different compactly supported kernels from the list shown in Table~\ref{tab:kernel}. Compact kernels lead to sparse precision matrices (see Section~\ref{ssec:PrecMatForm}) that are computationally less intensive for big datasets.
The comparison is based on the mean absolute error obtained by means of leave-one-out cross validation. The SLI predictions are based on the equation~\eqref{eq:sli-prediction}.

Table~\ref{table:campbell_kernels_MAE_k=3} shows the cross-validation MAE for the different kernel functions obtained using neighbor order $k=3$ to define the distances $D_{n,[k]}(\Samp)$ in equation~\eqref{eq:bandwidth}. As is evidenced Table~\ref{table:campbell_kernels_MAE_k=3}, the parameter $c_{1}$ that controls the gradient precision sub-matrix~\eqref{eq:sli-J-a} takes values close to 115 for all the kernels. The bandwidth control parameter $\mu$ shows more dependence on the kernel function. The MAE takes very similar values for all the studied kernels, ranging between $1.12$ m and $1.13$ m (truncating at two decimal points).
Based on these results we select the spherical kernel which gives one of the lowest MAE (slightly less than $1.12$ m). The Epanechnikov kernel also returns practically the same MAE value, but a higher value for $\mu$; as the latter determines the bandwidth, higher values of $\mu$ imply increased memory requirements and computational time. In general, we expect that any of the kernels listed in Table~\ref{table:campbell_kernels_MAE_k=3} (except for the uniform kernel which has a slightly higher MAE) will lead to comparable performance.

\begin{table}[!ht]
\caption{Comparison of the mean absolute error (MAE) value in coal thickness estimation by SLI using different kernel functions with neighbor order $k=3$ and initial parameter values $c_{1}^{(0)}=115$ and $\mu^{(0)}=1.5$.  MAE is optimized by means of a constrained optimization method. The value of $\mu$ is constrained in the interval $[0.5, 5]$, and $c_1 \in [\mathrm{eps}, \infty]$, where eps~$\approx 2.22 \times 10^{-16}$ is the floating-point relative accuracy.}
\label{table:campbell_kernels_MAE_k=3}
\centering
\renewcommand\tabcolsep{10pt} 
\renewcommand\arraystretch{1.2} 
\begin{tabular}{l|cccc}
\hline
\hline
Kernel & $c_1$ & $\mu$ & $\lambda$ & MAE (m) \\
\hline
Uniform   & $115.0000$                 & $1.5000$              & $0.1949$               & $1.1315$            \\
Triangular   & $115.0003$            & $2.0018$           & $0.1802$               & $1.1213$            \\
Quadratic   & $115.0003$            & $1.8958$           & $0.1907$               & $1.1228$            \\
Biweight   & $115.0002$            & $2.1772$           & $0.1859$               & $1.1209$            \\
Tricubic   & $115.0002$            & $2.1885$           & $0.1891$               & $1.1215$            \\
Epanechnikov    & $115.0003$  & $2.4526$  & $0.1676$  & $1.1199$    \\
\textbf{Spherical}   & $\mathbf{115.0003}$            & $\mathbf{2.3675}$           & $\mathbf{0.1727}$               & $\mathbf{1.1199}$            \\
\hline\hline
\end{tabular}
\end{table}

\begin{table}[!ht]
\caption{Comparison of the mean absolute error (MAE) value in coal thickness estimation by SLI using the spherical kernel function with neighbor order $k=2, \, 3, \, 4$ and initial parameter values $c_{1}^{(0)}=115$ and $\mu^{(0)}=1.5$. The value of $\mu$ is constrained in the interval $[0.5, 5]$, and $c_1 \in [\mathrm{eps}, \infty]$.  The computational time was between 5 and 9 minutes.}
\label{table:campbell_spherical_MAE_different_k}
\centering
\renewcommand\tabcolsep{10pt} 
\renewcommand\arraystretch{1.2} 
\begin{tabular}{l|cccc}
\hline
\hline
Neighbor order & $c_1$ & $\mu$ & $\lambda$ & MAE (m) \\
\hline
 $k=$ 2   & 115.0003            & 2.6795           & 0.1671              & $\mathbf{1.1182}$             \\
 $k=$ 3  & 115.0003    & 2.3675    & 0.1727    & 1.1199    \\
 $k=$ 4    & 115.0002  & 2.0518  & 0.1717  & 1.1188    \\
 \hline\hline
\end{tabular}
\end{table}

\subsection{Distribution of SLI bandwidths}
The distribution of bandwidths for the spherical kernel  is shown in the histogram plot of Fig.~\ref{fig:Bandwidths}. The bandwidths are obtained from~\eqref{eq:bandwidth} using $k=3$ and $\mu = 2.3675$.
As evidenced in the histogram plot, the bandwidth distribution is dispersed with a peak at $\approx 1500$~m; however, there are also values as large as $8000$~m. The dispersion of the distribution reflects the variations of the sampling density (cf. Fig.~\ref{fig:geotrhseis}) in Gillette field: Larger (smaller) values of the bandwidth are obtained in areas of sparser (denser) sampling.
The choice of neighbor order $k=3$ is based on  Table~\ref{table:campbell_spherical_MAE_different_k}, which compares the parameter estimates and the MAE values obtained for three different neighbor orders: $k=2, 3, 4$.  The table shows that $k=2$ gives the lowest MAE; however, all the MAE are equal within the second decimal place. In order to be impartial in our selection we choose $k=3$.  
As the neighbor order $k$ increases, the optimal $\mu$  decreases (see Table~\ref{table:campbell_spherical_MAE_different_k}), thus tending to keep the value of the optimal bandwidth constant. This behavior further demonstrates the ability of SLI to adjust the bandwidth adaptively without user input.

\begin{figure}[h!]
\centering
\includegraphics[scale=0.65]{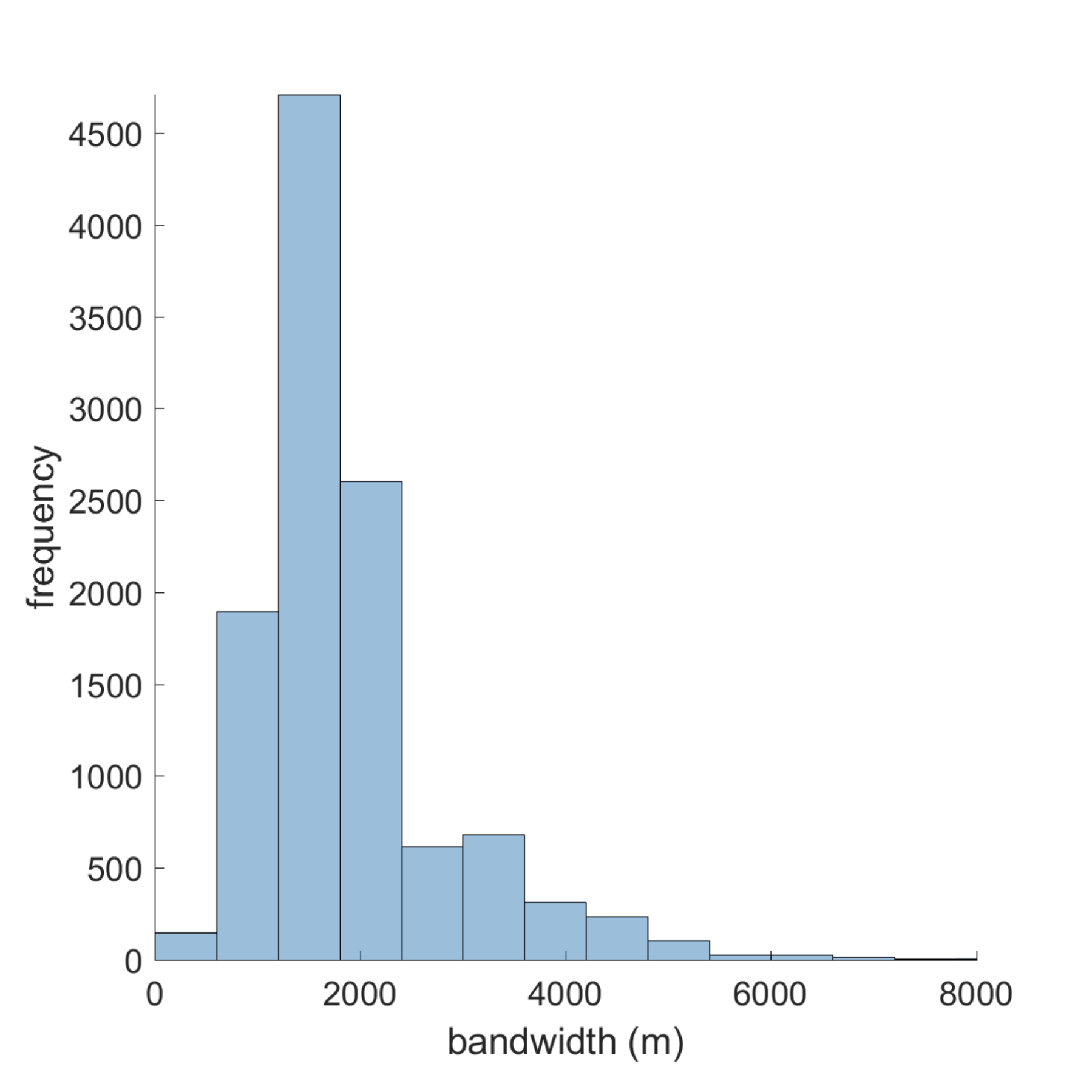}
\caption{Frequency histogram of the kernel bandwidths (m) used in the SLI method. The bandwidths were determined from~\eqref{eq:bandwidth} using the spherical kernel with near-neighbor order $k=3$.}
\label{fig:Bandwidths}
\end{figure}

\subsection{Parameter estimation using global optimization}
\label{ssec:global}
Note that the estimated $c_{1}^\ast=115.0003$ is very close to the initial guess $c_{1}^{(0)}=115$. This result suggests that the optimization, which is performed by means of the constrained optimization Matlab function \verb+fmincon+ with an interior-point algorithm, may be stuck in a local minimum. To investigate this possibility we use the global search optimization function in Matlab, which can find global solutions to problems that contain multiple maxima or minima. This procedure is time intensive, but it allows for a better exploration of the SLI parameter space.
We use the Matlab \verb+GlobalSearch+ solver which generates a sequence of starting points (initial guesses) in combination with the interior-point local optimization routine~\cite{Ugray07}. The global optimizer is run with different initial values and  bounds for $c_{1}$.

The results of global optimization for four different combinations of  initial values and bounds for $c_{1}$ are reported in Table~\ref{table:campbell_spherical_MAE_k=3_global}.
All runs take several hours to complete, due to the exhaustive search of the parameter space by the global optimizer. As evidenced in Table~\ref{table:campbell_spherical_MAE_k=3_global} the global optimizer gives different solutions depending on the initial conditions. These solutions are
 in a different region of parameter space  than the local optimum $(c_{1}^{\ast} \approx 115, \mu^{\ast}\approx 2.37, \la^{\ast} \approx  0.17)$. However, all the solutions lead to very similar results for the cost function (i.e., the MAE). This investigation shows that different local minima of the SLI cost function are essentially identical with respect to prediction performance. We further discuss this issue in Section~\ref{sec:cross-validation}.  Note that both the global and local solutions give essentially the same estimate for $\mu^\ast$.
At the same time, the ratio $c_{1}^{\ast}/\la^{\ast}$ is also approximately the same for both the global ($\approx$ 670) and the local ($\approx$ 676) estimates.
This observation is further analyzed and justified in Section~\ref{ssec:sli-stable}.

\begin{table}[!ht]
\caption{Estimates of SLI parameters using LOO-CV and optimal mean absolute error (MAE) for the coal thickness data. The estimates are derived using the Matlab global optimization solver GlobalSearch with different initial  guesses and constraint intervals  for $c_{1}$. The SLI model is implemented using the spherical kernel function with neighbor order $k=3$.  The parameter $\mu$ is constrained in the interval $[0.5, 5]$ and its initial guess is $\mu^{(0)}=1.5$.} 
\label{table:campbell_spherical_MAE_k=3_global}
\centering
\renewcommand\tabcolsep{10pt} 
\renewcommand\arraystretch{1.3} 
\begin{tabular}{l|ccccc}
\hline
\hline
Constraints and Initialization & $c_{1}^{\ast}$ & $\mu^{\ast}$ & $\lambda^{\ast}$ & MAE (m) & $c_{1}^{\ast}/\lambda^{\ast}$\\
\hline
$c_1 \in$ [eps, $\infty$], \, $c_{1}^{(0)}=115$   & $1.0001 \times 10^{4}$            & $2.3742$           & $14.9147$      & $1.1996$  &    670.48     \\
$c_1 \in [10, 10^{8}]$,  \, $c_{1}^{(0)}=115$     & 7.3092 $\times 10^{7}$            & 2.3743           & 1.0900 $\times 10^{5}$      & $1.1196$      &    669.72  \\
$c_1 \in$ [eps, $\infty$], \, $c_{1}^{(0)}=10$   &  8.3285 $\times 10^{13}$           &   2.3762         &  1.2427 $\times 10^{11}$               &     1.1196  & 669.91      \\
$c_1 \in [10, 10^{8}]$, \, $c_{1}^{(0)}=10$    &  8.2394 $\times 10^{7}$           &   2.3743        &  1.2287 $\times 10^{5}$             &     1.1196   & 670.58     \\
\hline\hline
\end{tabular}
\end{table}

\section{Variogram Modeling}
\label{sec:OrdKrig}
In this section we estimate the variogram model that will be used for ordinary kriging interpolation and resources estimation. We use the robust variogram estimator~\cite{Cressie1980} for the empirical variogram of coal thickness which we then fit to five common variogram models.

The  robust variogram estimator  is given by

\begin{equation}
\label{equation:variogram}
\hat{\gamma}(\bfr)= \dfrac{\large\left[ \dfrac{1}{2N({\bf{r}})} \sum_{i=1}^{N(\bfr)} \left| x(\bfs_{i})- x(\bfs_{i}+\bfr) \right|^{1/2}\large\right]^4 } {0.457+\dfrac{0.494}{N(\bfr)}+\dfrac{0.045}{N^2(\bfr)}} ,
\end{equation}

\noindent where $\bfr$ is the distance (lag) vector between two points and $N(\bfr)$ is the number of pairs separated by  $\bfr$.

We used~\eqref{equation:variogram} to calculate the omnidirectional variogram. We then tested five different theoretical variogram models (Spherical, Cubic, Gaussian, Power, and Exponential) combined with a nugget term. The fits of the theoretical models $\gamma(\bfr;\bmthe)$ to the empirical variogram are presented in Fig.~\ref{fig:models}. The best fitting model was selected by minimizing the sum of weighted squared errors given by~\eqref{eq:error_vario}. The sum of weighted squared errors for the five variogram models tested are shown in Table~\ref{table:variofit_errors}. The spherical model is shown to minimize the sum of weighted squared errors. The optimal  parameters for the spherical variogram model are shown in Table~\ref{table:parameters_variogram}.

\begin{equation}
\label{eq:error_vario}
\varepsilon(\bmthe)= \sum_{i=1}^L \frac{ \left[\gamma(\bfr_{i};\bmthe)-\hat{\gamma}(\bfr_{i})\right]^2}{\gamma^2(\bfr_{i};\bmthe)}
\end{equation}

 \begin{table}[!ht]
\caption{Sum of weighted squared errors between the empirical variogram and the five theoretical models that are tested based on equation~\eqref{eq:error_vario} (units are in m\textsuperscript{4}).}
\centering
\renewcommand\tabcolsep{10pt} 
\renewcommand\arraystretch{1.2} 
{\begin{tabular}{cccccc}
\hline\hline
Variogram Model & Gaussian & Cubic & \bf{Spherical} & Power & Exponential\\[1ex]
\hline
Fitting Error $\varepsilon(\bmthe)$ & 11.55 & 2.67& {\bf{1.92}} & 6.79&   3.37 \\[1ex]
\hline\hline
\end{tabular} }
\label{table:variofit_errors}
\end{table}

\begin{figure}[!ht]
\centering
\includegraphics[width=.8\textwidth]{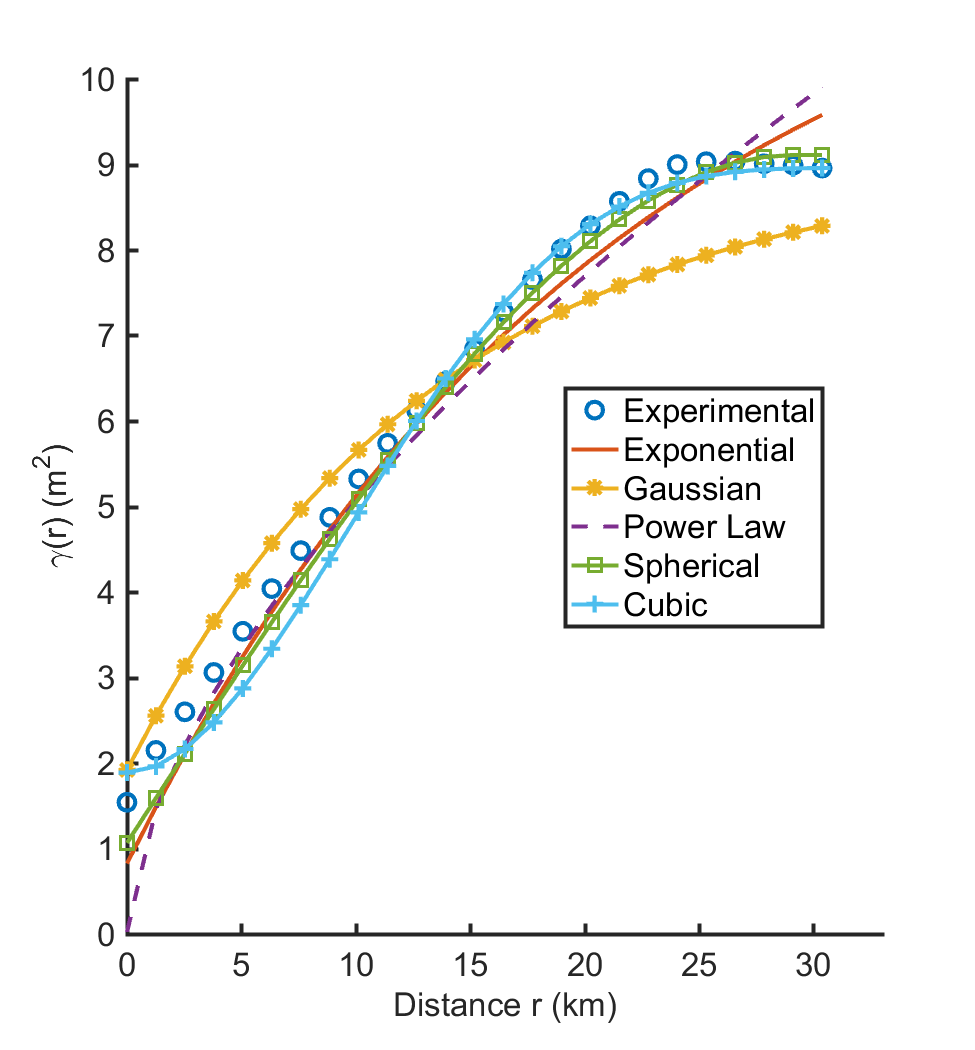}
\caption{Robust variogram estimator (circles) and optimal fits to theoretical models (Gaussian, Spherical, Power, Exponential, and Cubic). The horizontal axis is the lag distance $\| \bfr \|$.  The vertical axis represents the variogram values of coal thickness.}
\label{fig:models}
\end{figure}

\begin{table}[!ht]
\caption{Parameters of the optimal spherical variogram model: $\sigma^2$ is the correlated variance, $\xi$ is the correlation length, and $c_0$ is the nugget variance. The parameters are estimated by minimizing the error function~\eqref{eq:error_vario}.}
\centering
\renewcommand\tabcolsep{10pt} 
\renewcommand\arraystretch{1.2} 
{\begin{tabular} {ccc}
\hline\hline
$\sigma^2$ (m\textsuperscript{2}) & $\xi$ (km) & $c_{0}$ (m\textsuperscript{2})\\[1ex]
\hline
8.05& 29.1&1.07 \\[1ex]
\hline\hline
\end{tabular} }
\label{table:parameters_variogram}
\end{table}

The optimality of the spherical function  for both  the variogram model and the SLI  spherical kernel  is coincidental.
The spherical variogram model implies a spherical covariance, while the spherical kernel in SLI enters in the precision matrix.   Finally, we used the omnidirectional variogram estimator and did not consider anisotropic variogram models. We estimated geometric anisotropy using  the method of covariance Hessian identity~\cite{dth02,dth08}. This led to an estimate of the correlation length ratio along the two principal directions  slightly less than two, which is considered as marginally isotropic.

\section{Estimation of Coal Resources}
\label{sec:estimation-coal}

In this section we estimate the coal resources of Gillette field using SLI and ordinary kriging.
Our goal is to compare the SLI predictions with those of ordinary kriging. This comparison allows  assessing the interpolation performance of SLI against an established stochastic method.
The estimate of the resources is based on estimated coal thickness values at the nodes of an interpolation grid.
The interpolation domain (contained inside the red-line border in Fig.~\ref{fig:geotrhseis}) is covered by a map grid comprising $661355$ orthogonal (approximately square) cells. The cell size is 114.7~m $\times$ 134.7~m.

\subsection{SLI Estimation of Coal Resources} \label{ssec:SLI_Gilette_Pred}

The coal deposits in the upper part of Fort Union Formation in  Gillette field are estimated by SLI interpolation at a total volume equal to

\beq
V = A_{c} \, \sum_{p=1}^{P} \hat{x}(\bfz_{p}) = 74.1\times 10^{9}~\mathrm{m}\textsuperscript{3},
\eeq
where $A_{c}$ is the surface area of the interpolation grid cell and $\{ \hat{x}(\bfz_{p}) \}_{p=1}^{P}$ the estimated coal thickness for the layers described in Section~\ref{Sec:Data} at the center points of the grid. These estimates are based on  the spherical kernel function and the model parameters of Table~\ref{table:campbell_kernels_MAE_k=3}.

The SLI method estimates both the most probable coal thickness at each point and the respective prediction variance. The coal thickness prediction map is shown in Fig.~\ref{sfig:SLI_map}. The standard deviation of the SLI prediction,  i.e., the square root of the conditional variance of the SLI predictor given by~\eqref{eq:conditional-variance}, is shown in Fig.~\ref{sfig:s_SLI}.

\begin{figure}[ht!]
\centering
 \begin{subfigure}[b]{0.49\textwidth}
\includegraphics[width=\linewidth]{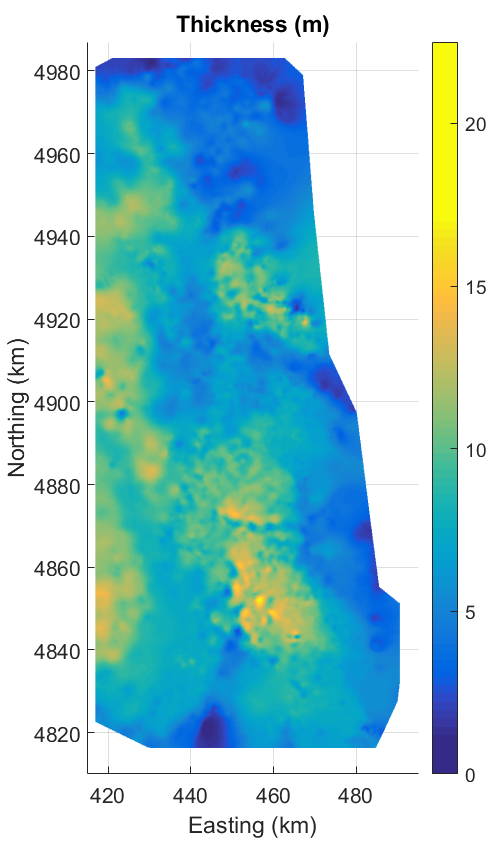}
\caption{SLI Map}
\label{sfig:SLI_map}
\end{subfigure}
 \begin{subfigure}[b]{0.49\textwidth}
\includegraphics[width=\linewidth]{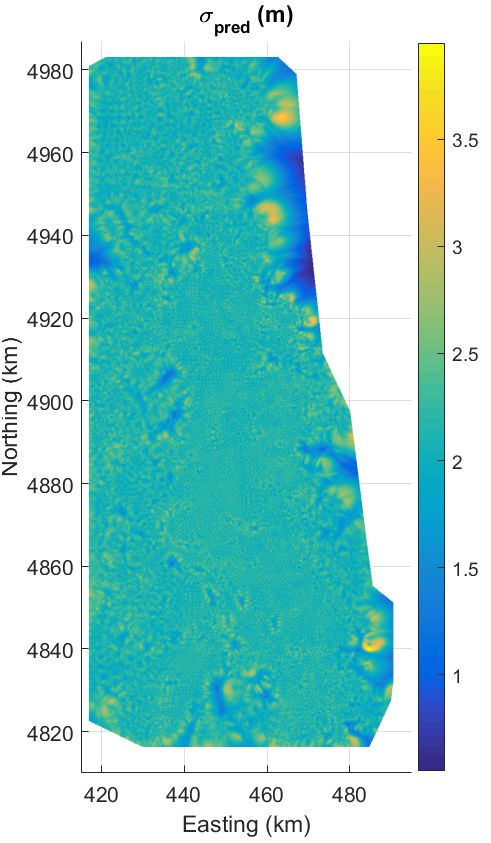}
\caption{SLI StD}
 \label{sfig:s_SLI}
\end{subfigure}
\caption{SLI estimates of coal thickness and standard deviation $\sigma_{\mathrm{SLI}}$ for the upper part of Fort Union Formation in  Gillette  field.}
\label{fig:SLI_map}
\end{figure}

An interesting feature of the SLI standard deviation map is the low uncertainty values near the Northeastern border of the study area which is a region of low sampling density. However, as we show in Fig.~\ref{fig:SLI_map-neighbors}, both the bandwidth and the number of SLI neighbors (i.e., sampling points that are coupled with the prediction point via the SLI model) are rather high (i.e., more than 600) near the boundary. Hence, the uncertainty in this area is constrained by a large, albeit distant, number of neighbors. The bandwidth for each prediction point is determined via the optimization process based on the neighbor order, the kernel function, and the parameter $\mu$. The bandwidth values can thus vary considerably. This is in contrast with fixed-radius kriging, which maintains a constant search radius for all the prediction points. The number of SLI neighbors for each prediction point is determined by the local bandwidth and the sampling point configuration.  An inspection of the SLI standard deviation map (Fig.~\ref{sfig:s_SLI}) and the map of local neighbors (Fig.~\ref{sfig:SLI_map_neighbors}) shows that the number of neighbors is correlated with the SLI standard deviation.  The SLI uncertainty seems to behave more like the standard deviation derived from several stochastic simulations, in which the uncertainty is  proportional both to the data spacing and to the complexity in the fluctuations, than like the kriging standard deviation.

\begin{figure}[h!]
\centering
\begin{subfigure}[b]{0.49\textwidth}
\includegraphics[width=\linewidth]{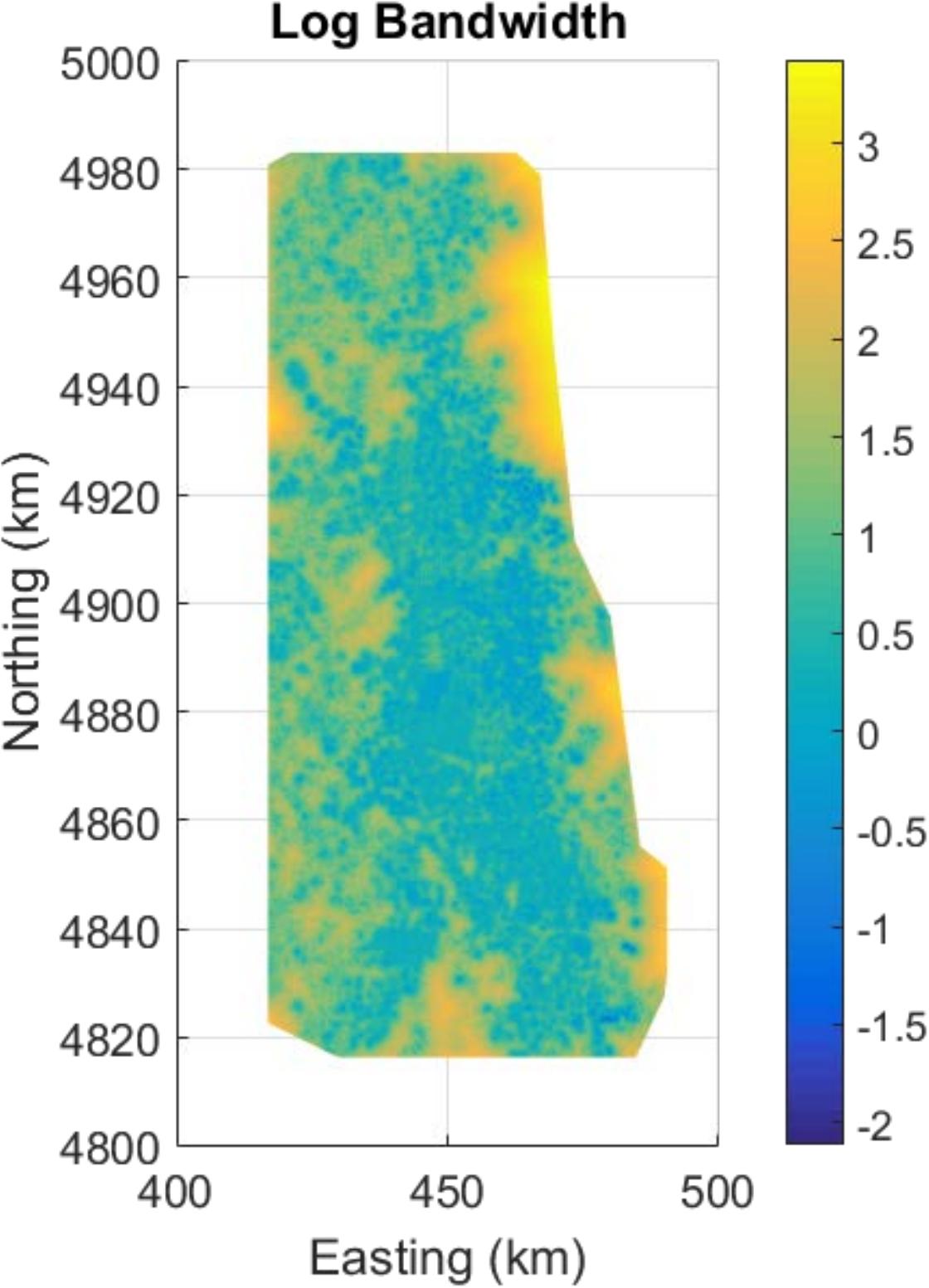}
\caption{Log bandwidth}
\label{sfig:s_SLI}
\end{subfigure}
\begin{subfigure}[b]{0.49\textwidth}
\includegraphics[width=\linewidth]{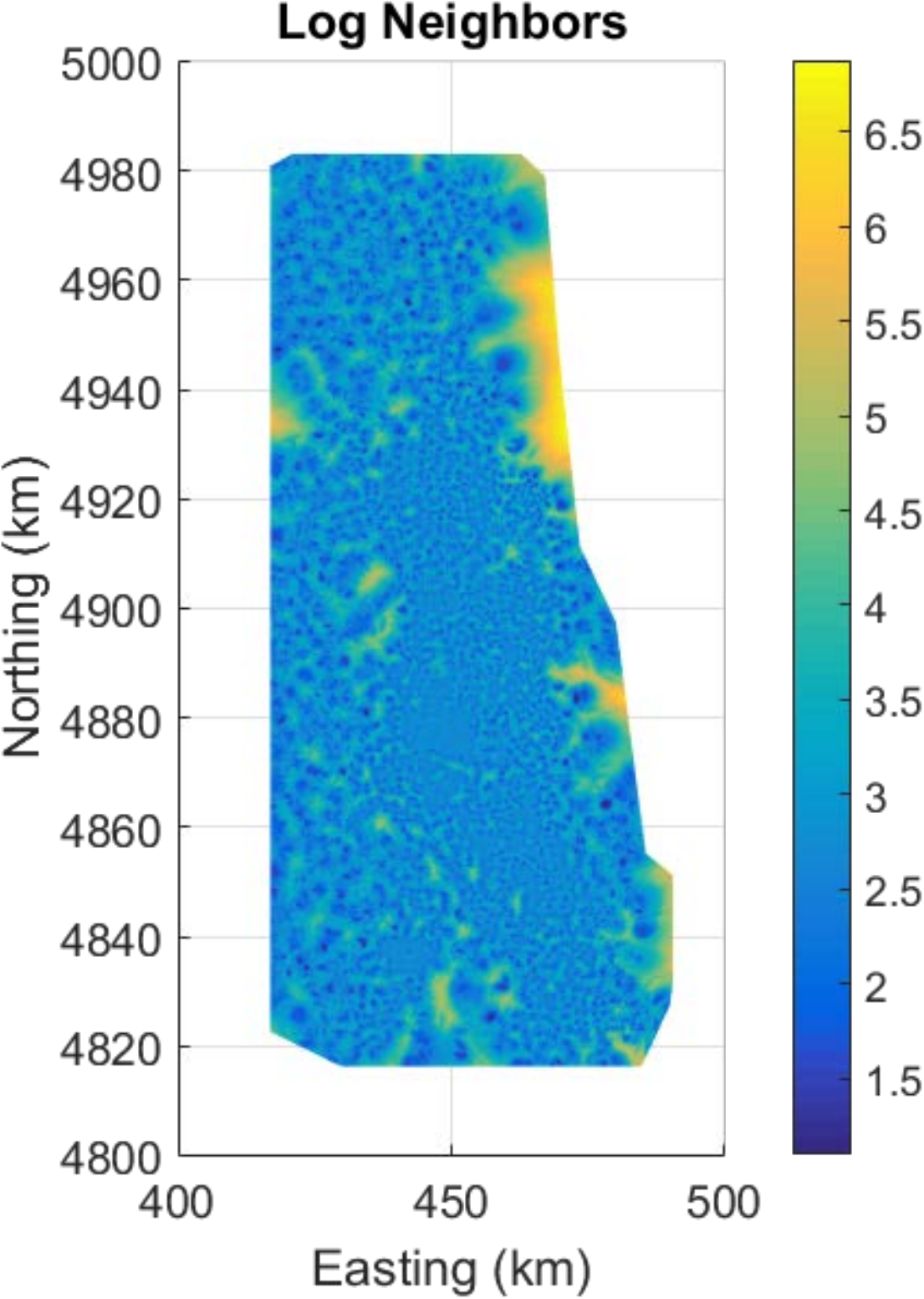}
\caption{Local neighbors}
\label{sfig:SLI_map_neighbors}
\end{subfigure}
\caption{(Left): Logarithm of the SLI bandwidth over the map area. (Right): Logarithm of number of neighbors per map grid point.}
\label{fig:SLI_map-neighbors}
\end{figure}

\subsection{OK Estimation of Coal Resources}
For the estimation of the coal resources in the study area and the visualization of the spatial distribution of coal thickness
we use an  OK interpolation grid identical with the SLI grid.
We tested four different circular search neighborhoods. The respective search radii are equal to 1.5 km, 5.7 km, 9.1 km and 14.6 km. These values were guided by the correlation length of the selected spherical model ($\approx 29$ km). We implemented LOO-CV for each search radius, using the same variogram parameters (shown in Table~\ref{table:parameters_variogram}).

Results for cross validation measures obtained with the search radii of 9.1~km and 14.6~km are shown in Table~\ref{tab:loocvneigh_kriging}.
There is no significant difference between the CV measures obtained with these radii and those obtained with radii of 1.5~km and 5.7~km.
For the purpose of mapping the coal thickness
we use the radius of 14.6~km because it allows almost full coverage of the map grid.
The resulting kriging maps of coal thickness and its standard deviation are shown in Figure~\ref{fig:krigingmaps}.
Note that the colorbar scale next to the OK standard deviation extends to values less than 1~m, i.e., lower than the nugget standard deviation ($\approx 1$~m). This is due to using common color scales for the OK and SLI maps. The values of the OK standard deviation are actually larger than the nugget standard deviation.

\begin{figure} [!ht]
\centering
 \begin{subfigure}[OK prediction]{0.49\textwidth}
\includegraphics[width=\linewidth]{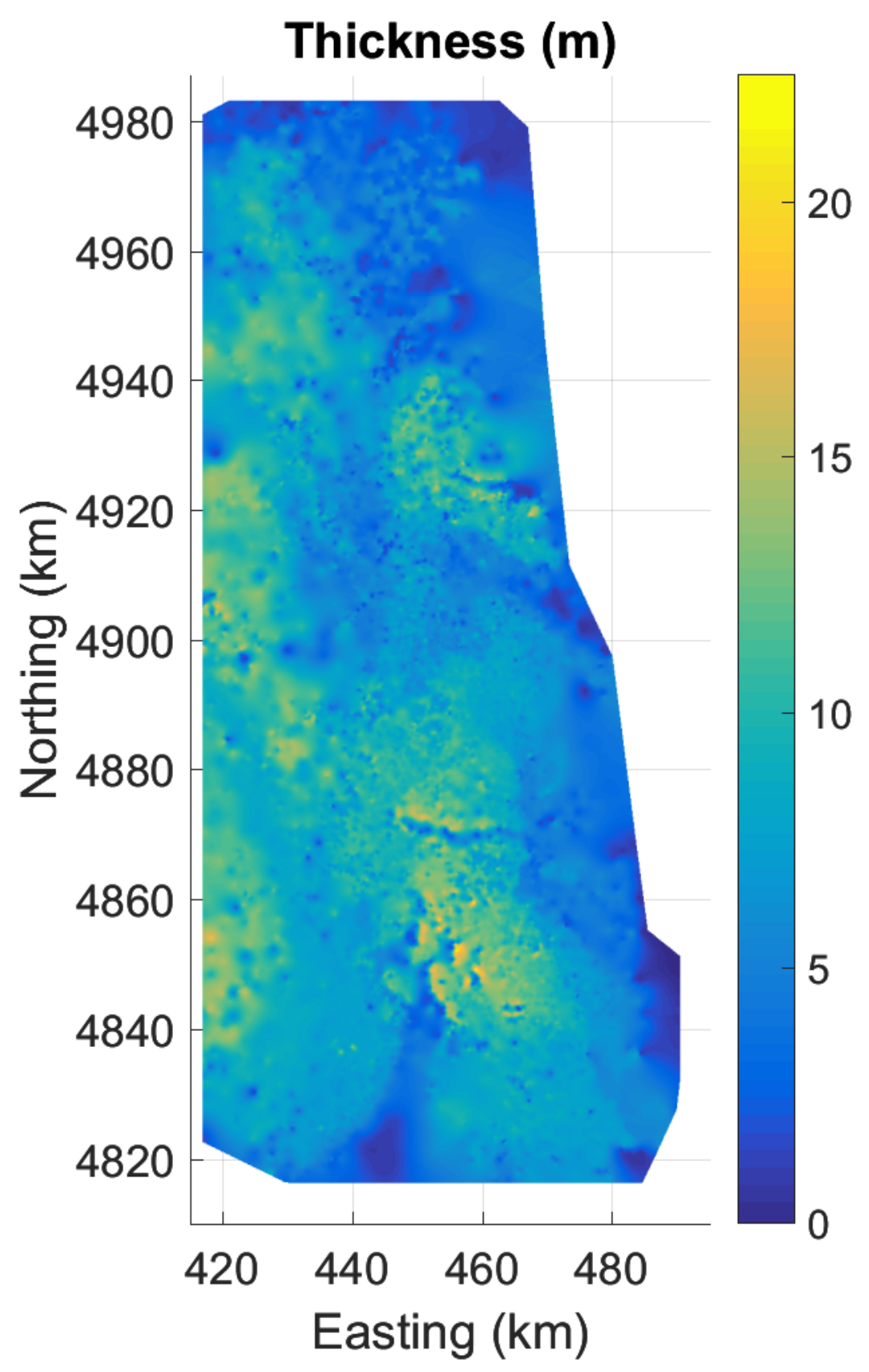}
\label{fig:krigingmap}
\end{subfigure}
 \begin{subfigure}[OK Std]{0.49\textwidth}
\includegraphics[width=\linewidth]{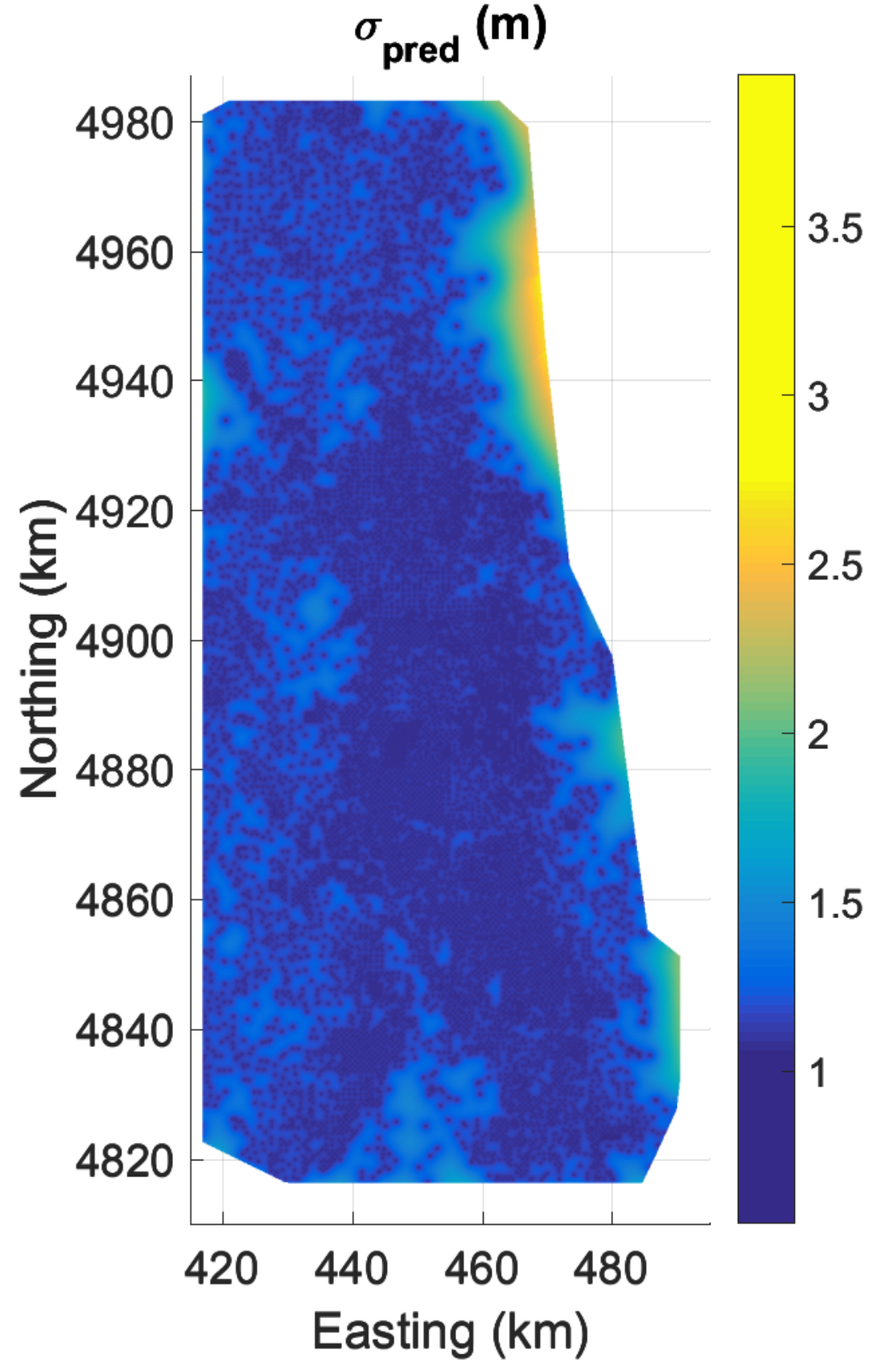}
\label{fig:stdvmap}
\end{subfigure}
\caption{(a) Map of coal thickness based on ordinary kriging. (b) Map of OK standard deviation $\sigma_{\mathrm{OK}}$. Both maps are obtained using the spherical variogram model with the optimal parameters (see Table~\ref{table:parameters_variogram}) and a search neighborhood of 14.6~km.}
\label{fig:krigingmaps}
\end{figure}

Based on ordinary kriging the coal deposits are estimated at  72.4$\times 10^9$~m\textsuperscript{3}, using the spherical variogram model and model parameters of Table~\ref{table:parameters_variogram} with neighborhood 14.6~km.
The SLI prediction  takes $\approx$~12 min, while kriging  with a search radius of 9.1~km (14.6~km) takes $\approx$~42.7~min ($\approx$~319.2~min).

\subsection{Method Comparisons}
In this section we compare the performance of SLI and OK interpolation in terms of (i) the LOO-CV error of the coal thickness estimates (ii) various cross validation statistics and (iii) the interpolated maps of coal thickness.

First,  we focus on the  leave-one-out cross validation error which is given by
\[
\epsilon(\bfs_{i}) = x(\bfs_{-i}) - \hat{x}_{i}(\bfs_{i}; \bmthe^{\ast}_{i}), \; i=1, \ldots, N,
\]
where $\hat{x}_{-i}(\bfs_{i}; \bmthe^{\ast}_{i}) $ is the SLI (OK) estimate at the point $\bfs_{i}$ based on the remaining $N-1$ points and the parameter vector estimate derived from the $N-1$ points.

The histograms of the LOO-CV errors
are shown in Fig.~\ref{fig:Pred_Error} and exhibit overall good agreement. The error distributions are symmetric, reflecting the unbiasedness of the SLI and OK predictors. The vast majority of the error values are distributed between $-$5~m and 5~m. However, there are a few error values with magnitudes between 5m and 10~m as well as some values with larger magnitude.

Figure~\ref{fig:Pred_Error} also exhibits box plots of the LOO-CV errors. The central mark of the box plots is the median of the empirical distribution,
and the edges of the box correspond to the 25th and 75th percentiles. The whiskers extend to the most extreme values that are  not considered outliers.
Values are considered as outliers if they lie  outside the range $[\, q_{3}-1.5(q_{3}-q_{1}), \; q_{3} + 1.5\,(q_{3}-q_{1})\, ]$, where  $q_{1}$ and $q_{3}$ are the 25th and 75th percentiles of the sample data.  This range corresponds to the  99.3\% probability interval for the  normal distribution. Outliers are plotted as individual points (marked by crosses).  A comparison of the two box plots shows that the SLI error is less disperse, while the OK error has more weight in the tails of the distribution, including a couple of values with magnitudes above 15. 

\begin{figure}[!ht]
\centering
\includegraphics[width=0.75\textwidth]{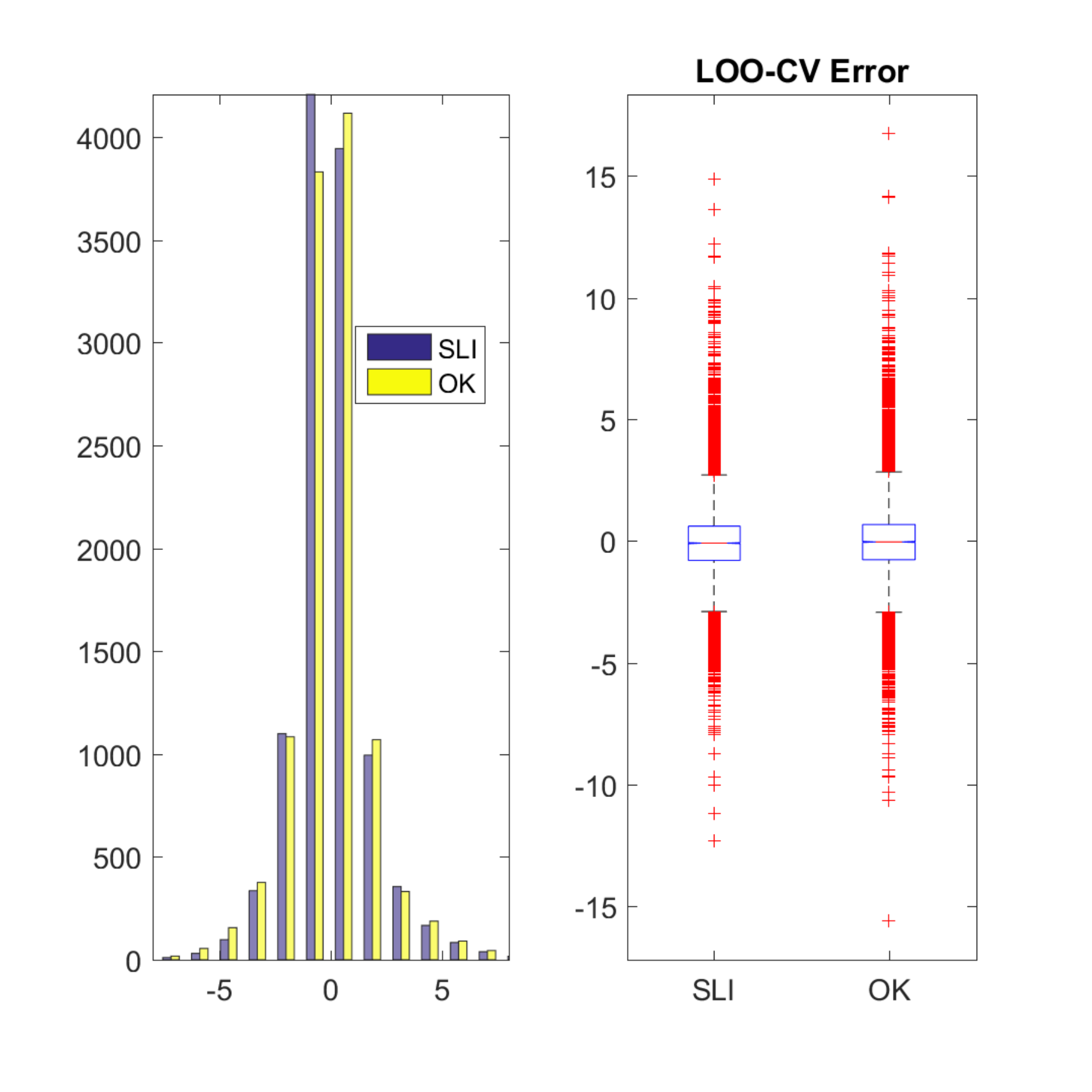}
\caption{Left: Histograms of the SLI (blue online) and OK (yellow online) leave-one-out cross validation errors.  The SLI parameters are given in  Table~\ref{table:campbell_kernels_MAE_k=3}. The spherical variogram parameters used in OK are given in  Table~\ref{table:parameters_variogram}.A kriging search radius of 9.1~km is used. Right: Box plots of the SLI and OK errors (see text for definition of the box plot elements). }
 \label{fig:Pred_Error}
\end{figure}

Next, we compare the predictive performance of SLI and OK in Table~\ref{tab:loocvneigh_kriging} by means of cross-validation statistics.
The following cross-validation measures are considered:
\begin{enumerate}
\item Mean Error (ME) to represent bias.
\item Mean Absolute Error (MAE).
\item Root Mean Square Error (RMSE).
\item Pearson's linear correlation coefficient ($R$).
\item Maximum Absolute Error (MaxAE).
\item Mean Absolute Relative Error (MARE).
\item Root Mean Square Relative Error (RMSRE).
\end{enumerate}

The SLI estimates have lower mean absolute error, root mean square error, maximum absolute error and higher correlation coefficient than the kriging estimates. On the other hand, kriging shows less bias, and lower relative MAE and RMSE.

\begin{table}[!ht]
\centering
\renewcommand\tabcolsep{3pt} 
\renewcommand\arraystretch{1.2} 
\caption{LOO-CV measures  of coal thickness based on (i) ordinary kriging with the spherical variogram model and (ii) SLI with spherical kernel and neighbor order $k=3$. The variogram parameters are given in  Table~\ref{table:parameters_variogram} and the SLI parameters are given in  Table~\ref{table:campbell_kernels_MAE_k=3}. A kriging search radius equal to 9.1~km is used. The following validation measures are shown: MAE: mean absolute error;
MaxAE: Maximum  absolute error; MARE: mean absolute relative error; RMSE: root mean square error;  RMSRE: root mean square relative error;    $R$: Pearson's correlation coefficient. S.R.: Search radius.}
\begin{tabular}{l|ccccccc}
\hline\hline
Method & ME (m) & MAE (m) & MARE & RMSE (m) & RMSRE & MaxAE (m) & $R (\%)$  \\[1ex]
\hline\hline
OK (9.1~km S.R.) &  4.0224 $\times  10^{-4}$ & 1.1788 & 0.2480 & 1.8067 & 0.8312 & 16.7286 & 80.27\%  \\[1ex]
OK (14.6~km S.R.) & 4.0573 $\times  10^{-4}$  & 1.1787 & 0.2480 & 1.8066 & 0.8313 & 16.7290 & 80.27\% \\[1ex]
SLI ($k=3$) & 5.6420 $\times 10^{-4}$ & 1.1199 & 0.2480 & 1.7033 & 0.8527 & 14.8781 & 82.23\% \\[1ex]
\hline\hline
\end{tabular}
\label{tab:loocvneigh_kriging}
\end{table}

Finally, the difference between the SLI and OK estimates of coal thickness over the map grid is illustrated in Fig.~\ref{fig:differences_maps}. The four  intervals used are defined as follows: OK$>$SLI corresponds to differences less than $-1$m (higher OK estimates), OK=SLI includes differences between $-1$m and 1m, SLI$>$OK contains differences between 1 and 3.5m, and SLI$\gg$OK  differences higher than 3.5m.  The difference between the SLI and OK predictions ranges from $-7.6$~m (higher OK estimates) to 9.67~m (higher SLI estimates).  Figure~\ref{fig:Difference_pred_map} shows that the SLI estimate of coal thickness at most map grid locations is approximately equal to OK estimates. At locations where this is not true, SLI estimates tend to exceed the OK estimates more frequently than the opposite. This leads to the overall higher SLI-based estimate of resources  ($74.1\times 10^{9}~\mathrm{m}\textsuperscript{3}$ versus $72.4\times 10^{9}~\mathrm{m}\textsuperscript{3}$ based on OK).

Regarding the spatial distribution of prediction uncertainty, Fig.~\ref{fig:Difference_stdv_map} shows that the SLI standard deviation in most places exceeds the OK standard deviation. This is not surprising, since the OK predictor is constructed based on the minimization of the mean error variance while the SLI method does not constrain the variance. At the same time, there are spatial pockets where the SLI uncertainty is lower than the respective OK value. A juxtaposition of the plots in  Fig.~\ref{fig:Difference_stdv_map} and  Fig.~\ref{fig:SLI_map-neighbors} shows that areas with lower SLI variance (e.g., near the northeastern border of the study domain) coincide with areas that have high bandwidth and, especially, high number of SLI neighbors. The presence of many neighbors  tends to constrain the uncertainty, since the precision matrix elements $J_{p,p}$ include a higher number of positive terms $w_{n,p}$ and $w_{p,n}$, according to~\eqref{eq:sli-J-pp}; this tends to increase $J_{p,p}$ and thus, according to~\eqref{eq:conditional-variance}, reduce the variance.

\begin{figure} [!ht]
\centering
 \begin{subfigure}[b]{0.49\textwidth}
\includegraphics[width=\linewidth]{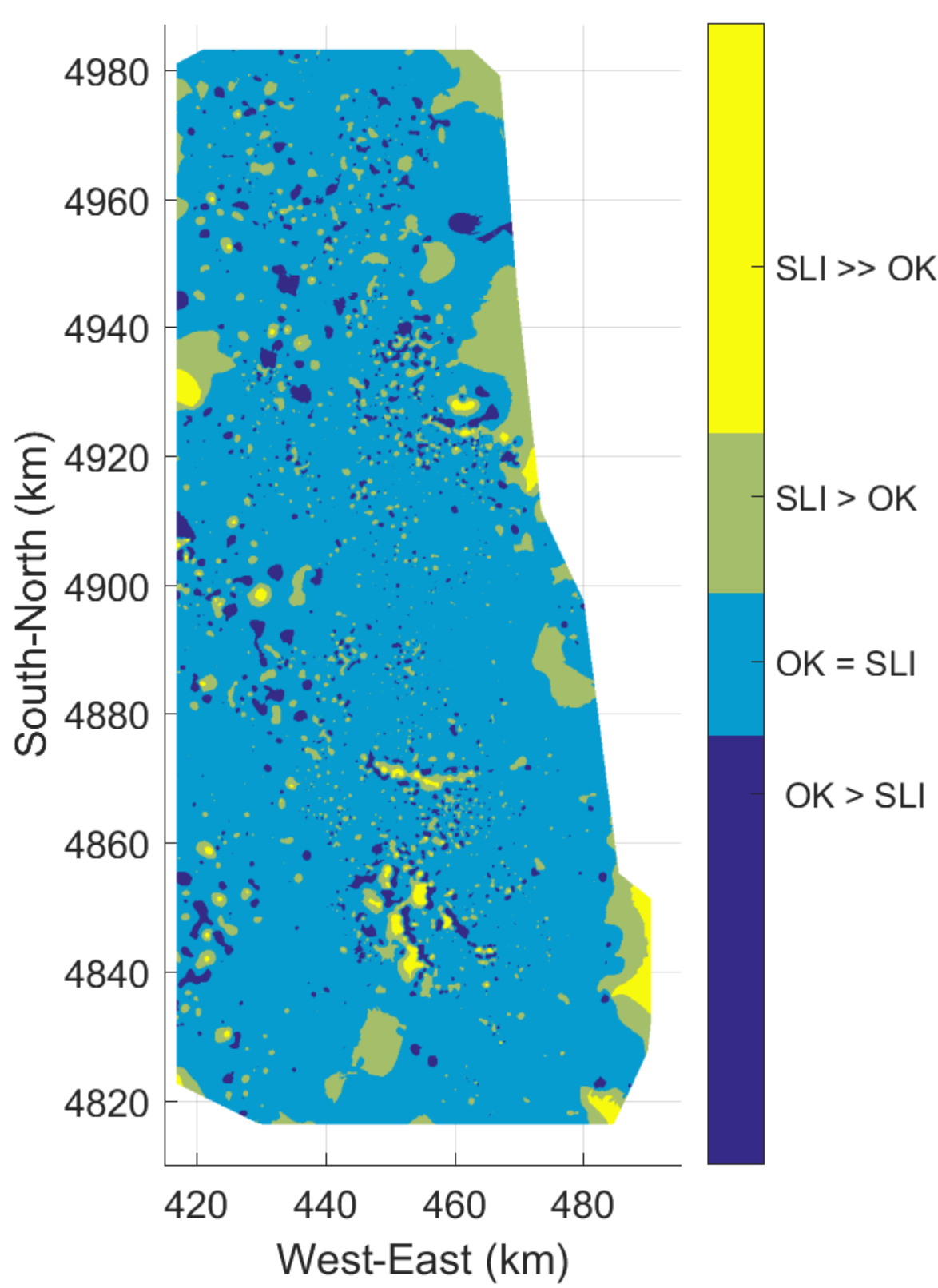}
\caption{Prediction differences}
\label{fig:Difference_pred_map}
 \end{subfigure}
 \begin{subfigure}[b]{0.49\textwidth}
\includegraphics[width=\linewidth]{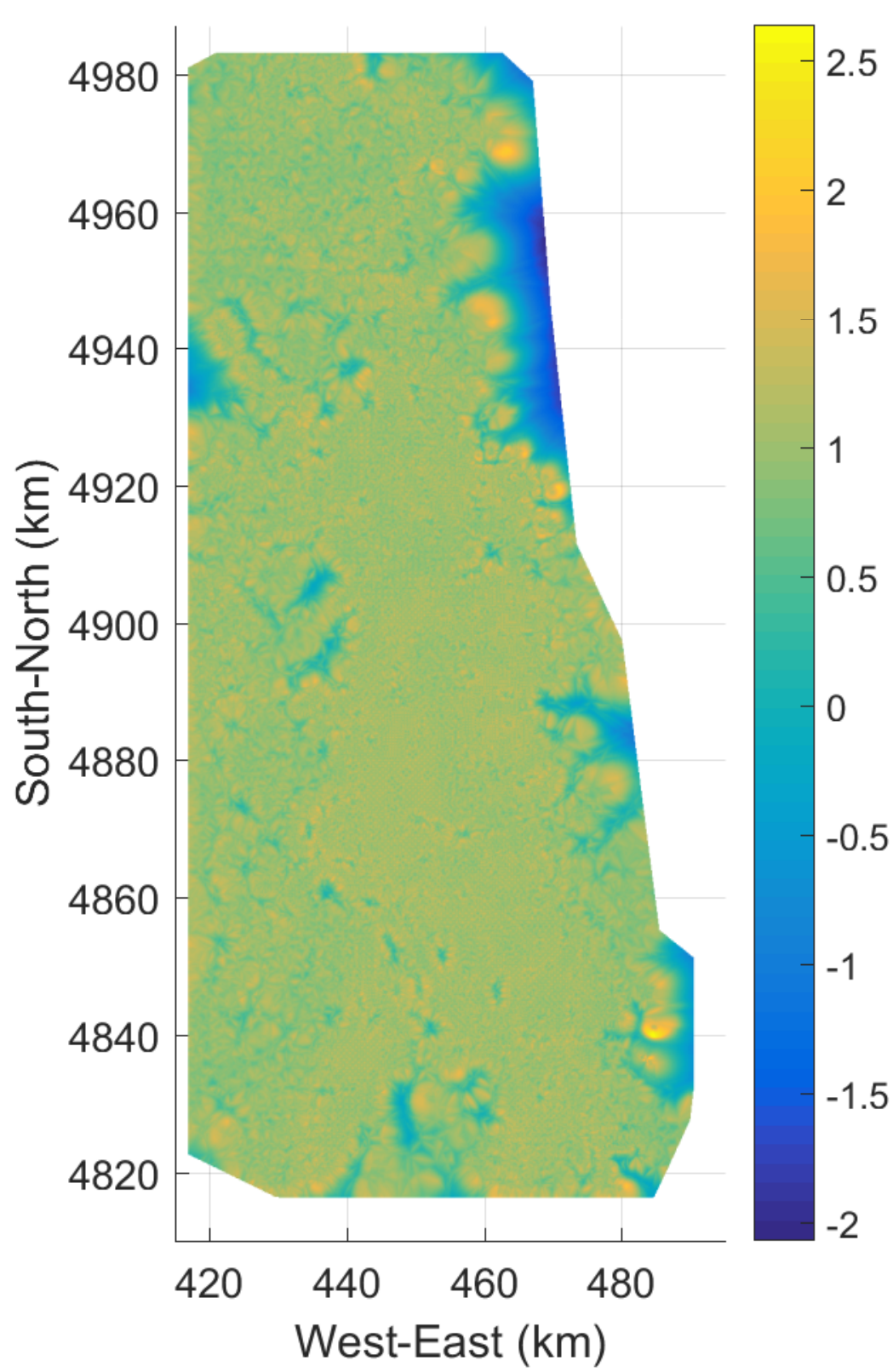}
\caption{Std differences}
\label{fig:Difference_stdv_map}
 \end{subfigure}
\caption{(a) Map of coal thickness differences between the SLI and OK estimates of coal thickness (m). (b) Map of the standard deviation differences ($\sigma_{\mathrm{SLI}}-\sigma_{\mathrm{OK}}$) of coal thickness. SLI maps are obtained using the spherical kernel and neighbor order $k=3$ (see Table \ref{table:campbell_spherical_MAE_different_k}). Kriging maps are obtained using the spherical variogram model with the optimal parameters (see Table~\ref{table:parameters_variogram}) and a search neighborhood of 14.6~km.}
\label{fig:differences_maps}
\end{figure}

\section{Cross Validation Analysis}
\label{sec:cross-validation}
In this section we use the method of cross validation~\cite{Arlot10,Chiles12} with two goals in mind: (i) to investigate the stability of the SLI model parameter estimates and (ii) to better assess the predictive performance of SLI and Kriging.

In cross validation studies the dataset is split in two disjoint sets: the test set  is used to estimate the model parameters, while the training set includes the points where the model performance is tested.  The performance of the model is quantified by calculating a validation statistic (see below) that measures the difference between the data values in the test set and those estimated by means of the model. The split of the dataset is performed multiple times resulting in  different configurations for the test and training sets. This leads to a set of validation statistics which can be averaged over all cross-validation configurations, thus providing a statistical measure of the model's performance.

Herein we use \emph{leave-P-out cross validation (LPO-CV)}, where $ 1 \le P <N$ is an integer number which determines how many points $N_{\mathrm{test}}=P$ are contained in the test set and how many points  $N_{\mathrm{train}}=N-P$ are contained in the training set.
Since the Campbell dataset includes a large number of data points, we use  $P= N_{\mathrm{test}}= \lfloor N \times p \rfloor$, where $\lfloor x \rfloor$ is the floor function which returns the greatest integer that does not exceed $x$, and $p=10\%, 90\%$ is a rate that determines which percentage of the data are used to test the model. In the current study we use 100 repetitions (folds) for a \emph{small training set with $p=10\%$}  (i.e., we keep ten percent of the data in the test set) and with a \emph{large training set with $p=90\%$}.  The test and training sets are randomly selected and they are used in  cross validation analysis with both kriging and SLI.

\subsection{SLI Parameter Stability Analysis}
\label{ssec:sli-stable}

We investigate the dependence of the SLI parameters  on the number and the configuration of the sampling points. We are motivated by the observations in Section~\ref{ssec:global},   multiple local optima with very similar values of the cross-validation cost function seem to exist.
The parameter $\mu$ is not further investigated since its estimate is stable (cf. Section~\ref{ssec:global}).  The SLI parameter estimates are based on the spherical kernel function with neighbor order $k=3$. The initial guesses for the parameter values are $c_{1}^{(0)}=115$ and $\mu^{(0)}=1.5$. The (local) constrained minimization algorithm is used.

The SLI parameters $c_{1}$ and $\lambda$ estimated for each of the 100 folds based on a small training set  (1142 locations)   are presented in Fig.~\ref{fig:train_10_SLI}.   As evidenced in Fig.~\ref{c1}, 61  estimates of $c_1$ have values in the range of $115.0-115.4$. However, there are also two estimates of $c_1$ that are of order  $10$, two estimates in the range $10^2-10^3$, 34 estimates of $c_1$ that are of order  $10^5-10^7$. While this behavior  may seem odd in the first place,   the ratio of $c_1 / \lambda $ is approximately constant, i.e.,
 $c_1 \approx 59.79 \lambda$ as we show in Fig.~\ref{c1-la}.
Note that similar behavior was observed in LOO-CV (see Section~\ref{ssec:global}), albeit the value of the ratio was different ($\approx 670)$.
The value of the ratio in general depends on the configuration of the sampling points (cf.  Table~\ref{table:campbell_spherical_MAE_k=3_global}).

\begin{figure}[ht!]
\centering
\begin{subfigure}[b]{0.49\textwidth}
\includegraphics[width=\linewidth]{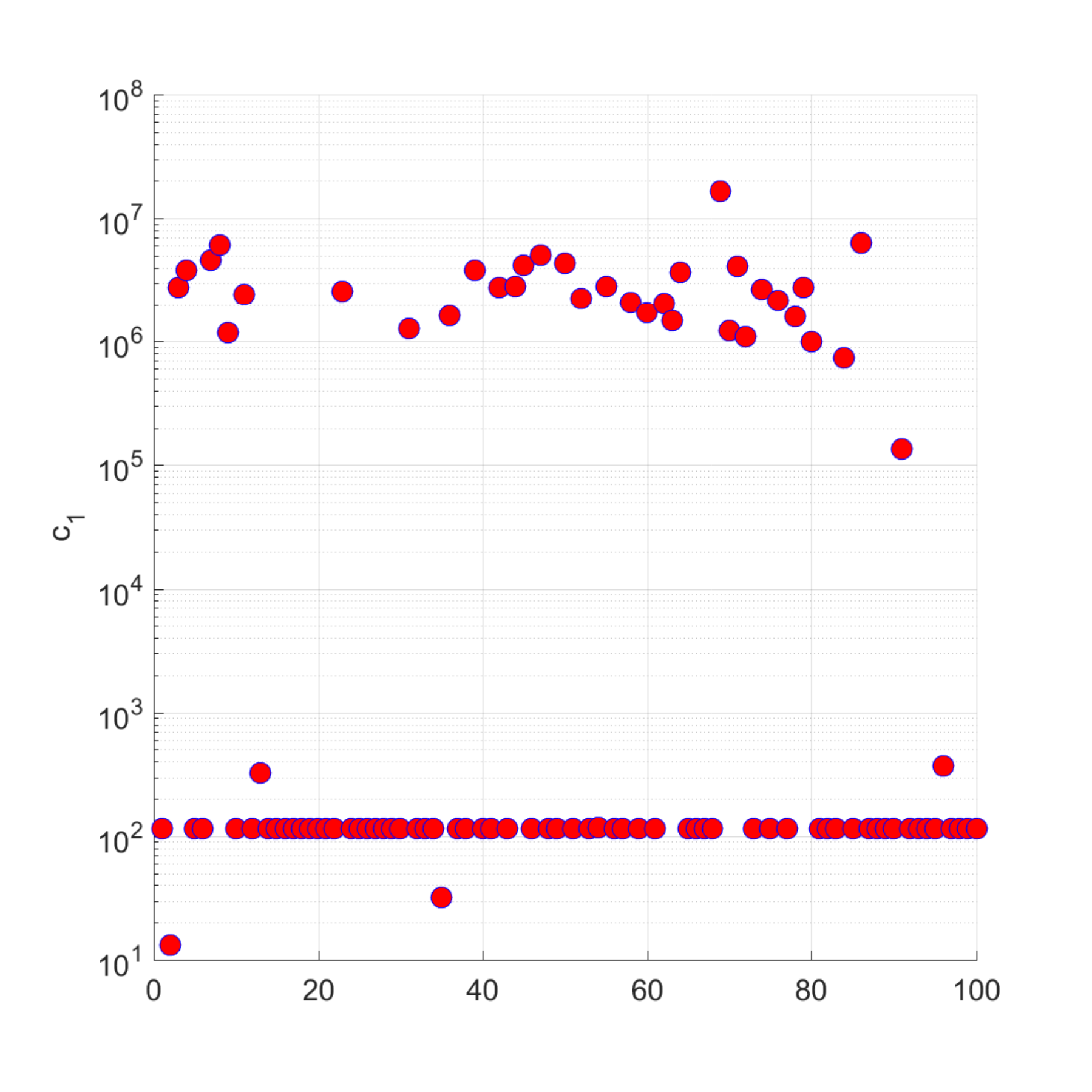}
\caption{$c_{1}$}
\label{c1}
 \end{subfigure}
\begin{subfigure}[b]{0.49\textwidth}
\includegraphics[width=\linewidth]{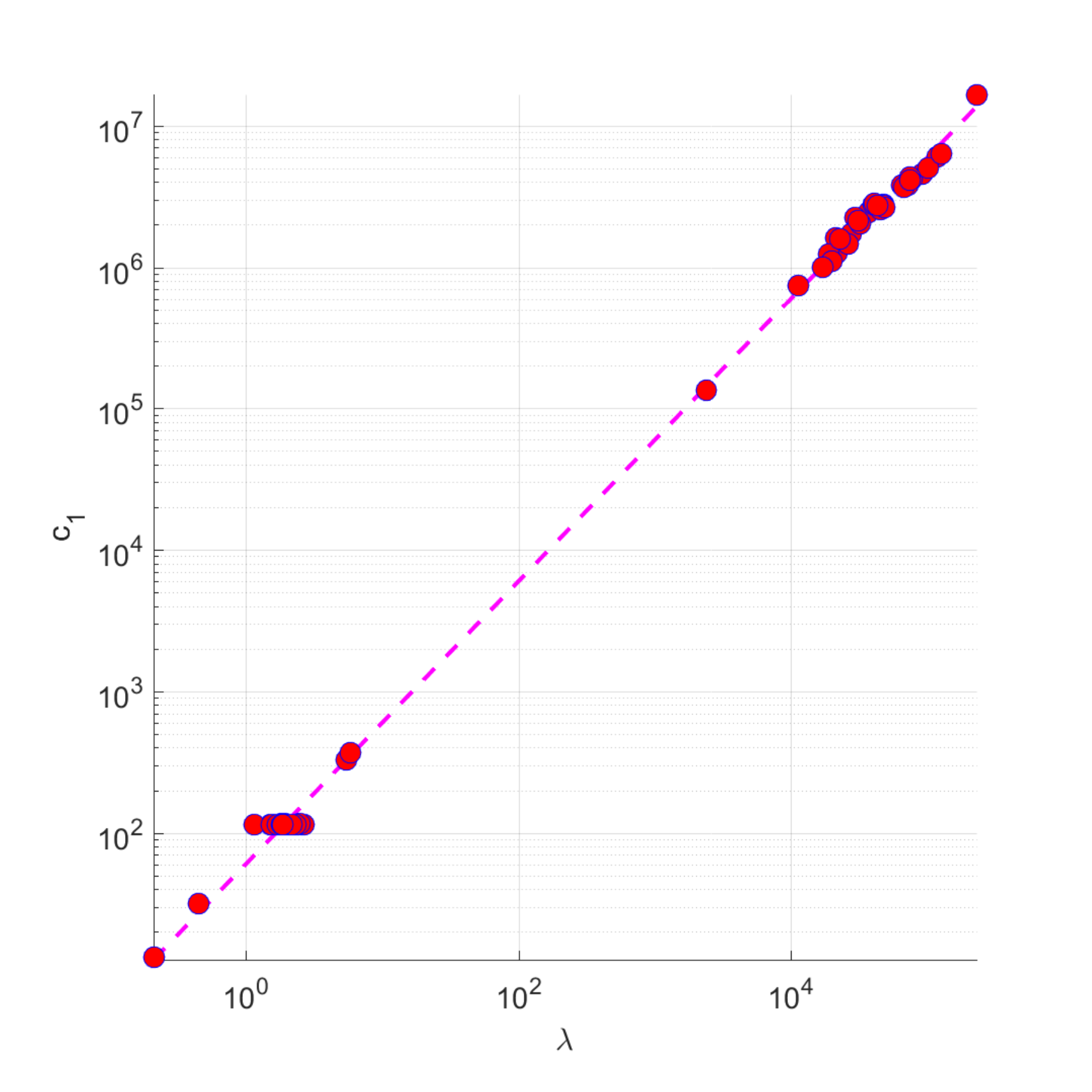}
\caption{$c_{1}$ vs. $\lambda$}
\label{c1-la}
 \end{subfigure}
\caption{SLI $c_{1}$ and $\lambda$ parameters estimated from 100 small training sets that include 10\% of the data. (a) Values of the $c_1$ parameter. (b) Scatter plot of $c_1$ versus $\lambda$ (logarithmic axes are used).}
 \label{fig:train_10_SLI}
 \end{figure}

In order to develop some understanding for this behavior, let us recast the SLI predictive equations~\eqref{eq:sli-prediction} and~\eqref{eq:sli-J-p} as follows

\beq
\label{eq:sli-prediction-free}
\hat{\xo}_{p}  = \mx - \frac{1}{ N^{-1} + c_{1}^\ast \, W_{p}} \, \sum_{n=1}^{N}  \,  c_{1}^\ast \, \left( w_{n,p} + w_{p,n} \right) \, (\xo_{n} - \mx)
\eeq
where
\[
W_{p} =\sum_{n=1}^{N} \left( w_{n,p} + w_{p,n} \right).
\]
Equation~\eqref{eq:sli-prediction-free} shows that the prediction is independent of $\lambda$. In addition, if $N^{-1} \ll c_{1}^\ast \, W_{p}$, the prediction essentially remains invariant if
$c_{1}^\ast$ is replaced by $\alpha c_{1}^\ast$, since the factors $c_{1}^\ast$ in the numerator and the denominator cancel out.
Then, SLI energy is also multiplied by $\alpha$. Finally, the new value of $\lambda$ according to~\eqref{eq:lambda} is also multiplied with $\alpha$ since it is proportional to the energy. These arguments show that different values of $c_{1}^\ast$ and $\lambda^\ast$ with constant  ratio $c_{1}^\ast/\lambda^\ast$ lead to nearly identical predictions, provided that the condition $N\, c_{1}^\ast \, W_{p} \gg 1$ is fulfilled.
Exact proportionality only holds if the above condition is fulfilled for all the prediction points.

We repeat the same experiment using large training sets that include $90\%$ (i.e., $10278$) of the points. In this case the estimates of  $c_1$ are close to the initial guess $c_1=115$ for all the subsets. Similarly, $\lambda$ estimates are in the range   $[0.184,0.200]$.
The results are shown in Fig.~\ref{fig:train_90_SLI}. In this case the estimates of $c_{1}, \lambda$ are considerably more stable and there is no apparent trend. This  is explained by the higher sampling density of larger training sets, and the fact that the training sets share many points (since $90\%$ of the total number of points are used in each training set).

\begin{figure}[h!]
\centering
\begin{subfigure}[b]{0.49\textwidth}
\includegraphics[width=\linewidth]{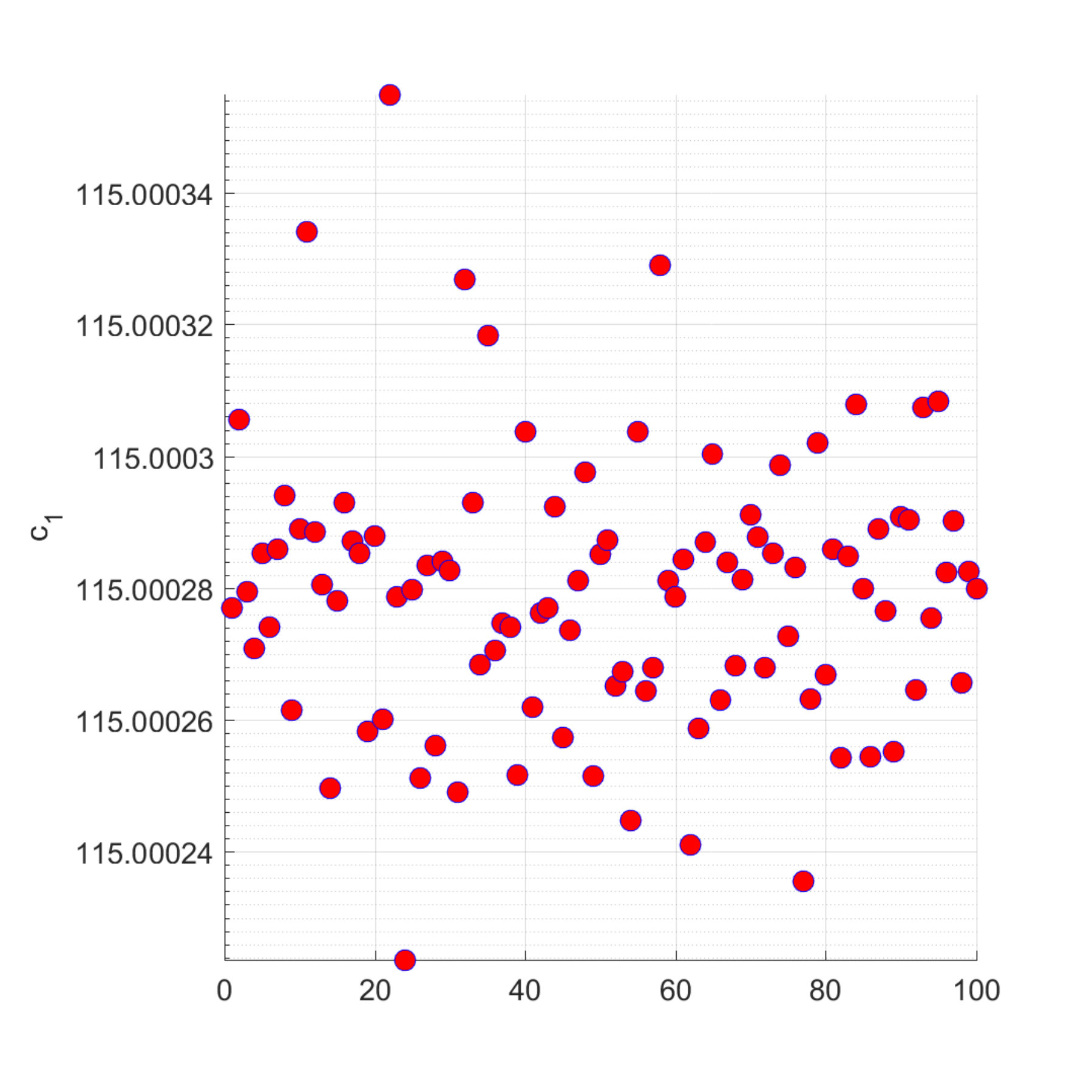}
\caption{$c_1$}
\label{c1-90}
 \end{subfigure}
\begin{subfigure}[b]{0.49\textwidth}
\includegraphics[width=\linewidth]{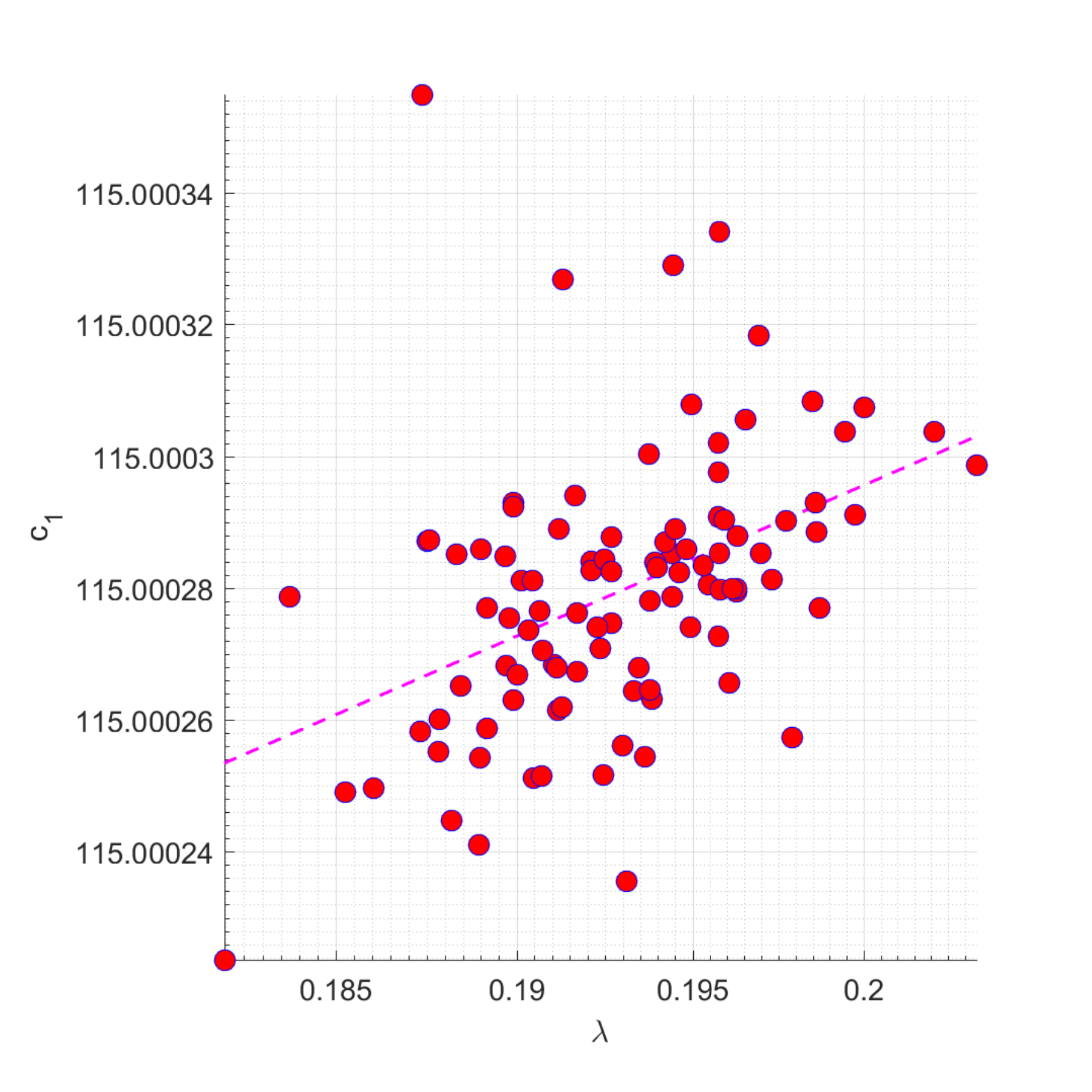}
\caption{$c_1$ vs. $\lambda$}
\label{c1-la-90}
 \end{subfigure}
\caption{SLI $c_{1}$ and $\lambda$ parameters estimated from 100 large training sets that include 90\% of the data. (a) Values of the $c_1$ parameter. (b) Scatter plot of $c_1$ versus $\lambda$.}
 \label{fig:train_90_SLI}
 \end{figure}


\subsection{Comparison of Cross-Validation Statistics}

In this section we compare the interpolation performance of SLI and OK in terms of the cross-validation statistics, using results from both the small and large training sets.
In the SLI analysis the spherical kernel with neighbor order  $k=3$ is used while in OK the spherical model with nugget is fitted to the empirical variogram and a kriging search radius of 9.1~km is used to reduce computational time.  This radius is sufficient for cross-validation analysis.

In Fig.~\ref{fig:boxplots-train-10-cv} we compare the box plots for ME, MAE, RMSE and R for the two methods based on cross validation analysis of small training sets. In addition,  Fig.~\ref{fig:boxplots-train-90-cv} extends the comparison to large training sets. The main patterns evidenced in the box plots between the SLI and the OK cross validation statistics are the following:

\begin{enumerate}

\item The SLI MAE and RMSE have lower median values than the respective OK measures (for both small and large training sets).
\item The correlation coefficient has a higher median value for  SLI than for OK (also for both small and large training sets).
\item In the case of small training sets, the ME median for OK is closer to zero (0.0055) than for SLI (0.0155).  In addition, the inter-quartile range is higher for SLI (0.0898) than for OK (0.0770).
\item This pattern is reversed for large training sets: the OK-based ME median is $-0.0050$ while the SLI-based ME median is $-0.0011$. The inter-quartile range is higher for OK ($0.0781$)  than for SLI ($0.0702$). So, it seems that SLI makes more efficient use of the higher sampling density.

\end{enumerate}

\begin{figure}
\centering
\includegraphics[width=0.9\textwidth]{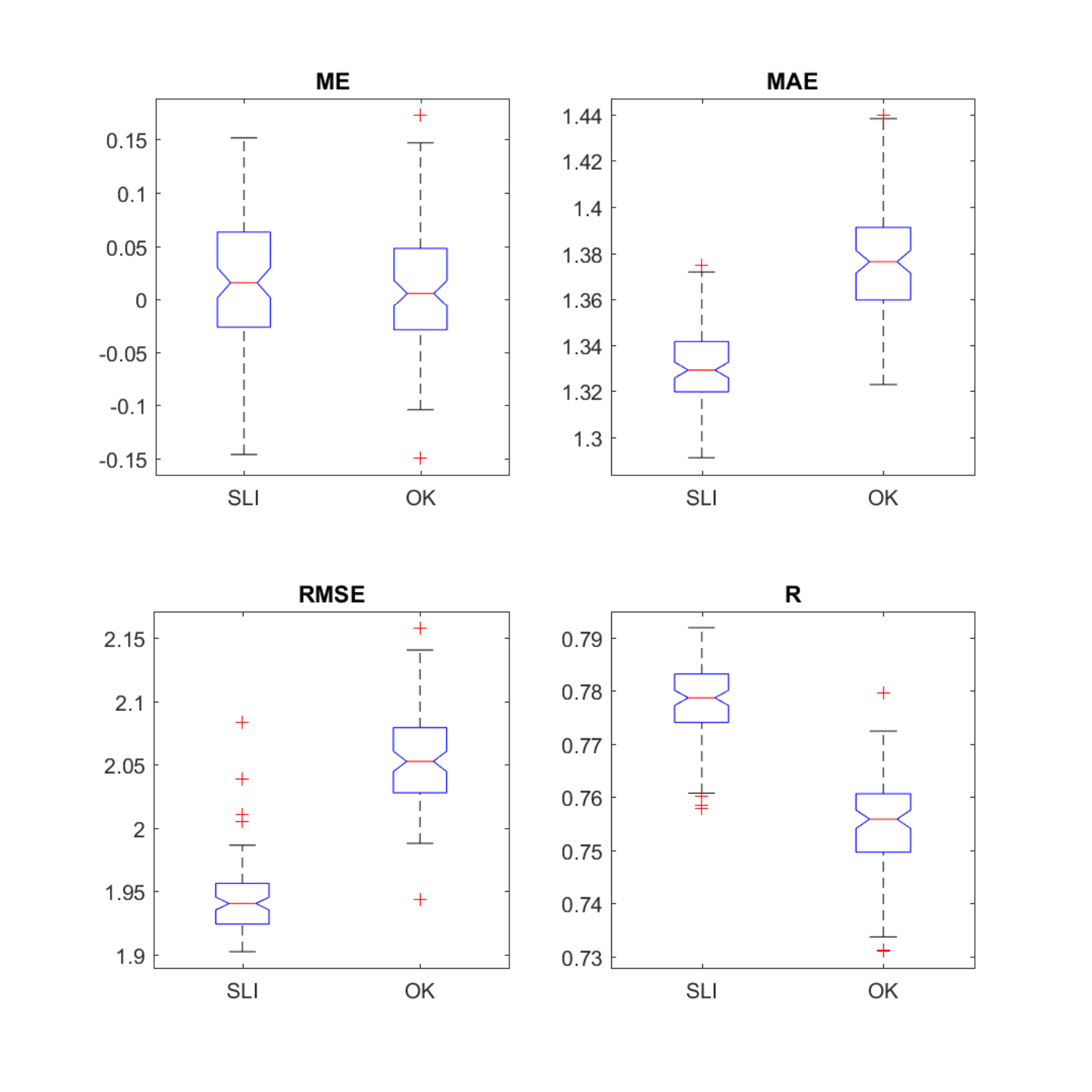}
\caption{Box plots of cross-validation statistics for the SLI and OK methods. The CV statistics are calculated using a training set that contains 10\% of the data points and a validation set that comprises 90\% of the points. }
\label{fig:boxplots-train-10-cv}
\end{figure}

\begin{figure}
\centering
\includegraphics[width=0.9\textwidth]{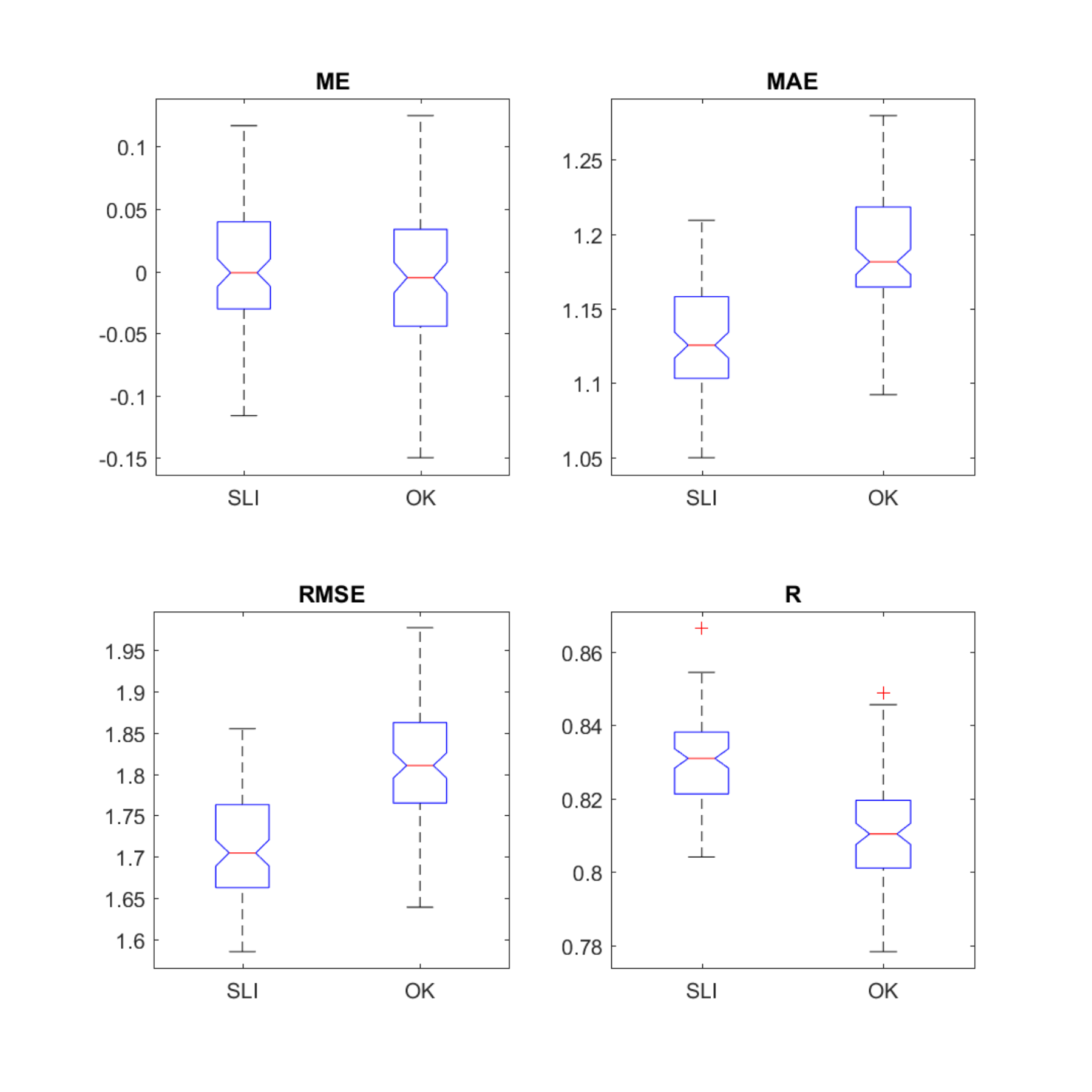}
\caption{Box plots of cross-validation statistics for the SLI and OK methods. The CV statistics are calculated using a training set that contains 90\% of the data points and a validation set that comprises 10\% of the points. }
\label{fig:boxplots-train-90-cv}
\end{figure}

\clearpage

\section{Conclusions}
\label{sec:Conclusions}

This paper presents a stochastic local interaction (SLI) model for spatial estimation which is based on a generalized concept of the square gradient for scattered data. The SLI spatial prediction is expressed, conditionally on the data, in terms of a \emph{sparse precision} matrix, which is demonstrably positive definite. The  precision matrix allows \emph{fast estimation} of the mean and variance of the conditional  distribution at the prediction locations. The sparse SLI precision matrix is a feature analogous to that of Gaussian Markov random fields defined on regular grids. Hence, SLI extends the original GMRF idea  by means of kernel functions that implement local interactions between the nodes of  unstructured networks. This implementation is suitable for modeling scattered spatial data. In addition, SLI allows efficient processing of large datasets, because it incorporates spatial correlations in the form  of sparse precision matrices. For example, for the current study the sparsity index is $\approx 0.21\%$, i.e., less than 1\% of the precision matrix elements are non-zero. The sparsity feature implies considerably reduced memory space requirements.  In addition, matrix inversion is not required to derive the SLI estimates and their standard deviations at unmeasured locations, thus reducing the necessary computational time.

With respect to the analysis of the Campbell coal dataset, SLI gives comparable (slightly better) cross-validation statistics compared to those obtained with  Ordinary Kriging with a finite search radius. In addition, SLI estimates a higher volume of coal in the study area than OK. Regarding the prediction uncertainty, the SLI  estimates of conditional variance are overall higher  than those of  Ordinary Kriging. This is not surprising, since the SLI predictor is not based on optimization of the error variance. In terms of computational speed, filling the interpolation grid with the SLI predictor is about 3-25 times faster than with ordinary kriging ($\approx$ 12 min with SLI versus $\approx$ 42 min with a search radius of 9.1 km and $\approx$ 5 hr with a search radius of 14.6 km). 
Overall, we have shown that SLI is a competitive method for the spatial interpolation of scattered data which can be applied to the estimation of natural resources, providing computational benefits for large datasets. In conclusion, we believe that SLI will provide a competitive tool for spatial analysis of large spatial datasets.

In terms of future research,  anisotropic and non-stationary  extensions of SLI are possible, as well as the development of SLI-based conditional simulation capabilities that can take advantage of the sparse precision matrix structure.  Geometric anisotropy can be handled using a directional measure of distance  in the  kernel functions. Non-stationarity can be addressed in two ways: firstly, by introducing a variable mean function $\mx(\bfs)$ instead of a constant $\mx$ in the SLI energy~\eqref{eq:sli-ene}, and secondly by replacing the rigidity parameter $c_{1}$ with a rigidity function $c_{1}(\bfs)>0$. The application of SLI to  spatiotemporal data is also possible,  and a first step in this direction is presented in~\cite{dth19}. More general SLI models with additional terms in the energy function and multiple variates can also be developed. Finally, the positive-definiteness of the precision matrix is maintained even if non-Euclidean measures of distance are used in the kernel functions. This is a potential advantage compared to standard geostatistical methods which are based on  covariance function models, since the latter are not necessarily permissible for non-Euclidean distance metrics.

\newpage

\section*{Acknowledgment}
We would like to thank Ricardo Olea (United States Geological Survey, Reston, Virginia, USA) who provided  the coal data analyzed and graciously read a draft of this manuscript, offering valuable comments and insights.


\section*{References}

\end{document}